\def\qed{\hfill {\hbox{${\vcenter{\vbox{               
   \hrule height 0.4pt\hbox{\vrule width 0.4pt height 6pt
   \kern5pt\vrule width 0.4pt}\hrule height 0.4pt}}}$}}}
\newtheorem{theorem}{Theorem}
\newtheorem{definition}{Definition}
\newtheorem{lemma}[theorem]{Lemma}
\newtheorem{proposition}[theorem]{Proposition}
\newtheorem{corollary}[theorem]{Corollary}
\newtheorem{example}{Example}
\newtheorem{remark}[example]{Remark}
\newenvironment{proof}[1][Proof]{\smallskip\noindent{\bf #1.}\quad}%
{\qed\par\medskip}
\date{}
\title{\Large \textbf{The Theory of Pseudoknots}}
\author{
Allison Henrich
 \footnote{henricha@seattleu.edu, Seattle University, Seattle, WA 98122, United States}\hspace{1cm}
Rebecca Hoberg \footnote{rahoberg@gmail.com, University of Washington, Seattle, WA 98195, United States}\hspace{1cm}
Slavik Jablan\footnote{sjablan@gmail.com, The Mathematical Institute, Belgrade, 11000, Serbia}\\
Lee Johnson\footnote{johns193@seattleu.edu, Seattle University, Seattle, WA 98122, United States}\hspace{1cm}
Elizabeth Minten\footnote{minten@email.sc.edu, University of South Carolina, Columbia, SC 29208, United States}\hspace{1cm}
Ljiljana Radovi\' c\footnote{ljradovic@gmail.com, Faculty of Mechanical Engineering, Ni\v s, 18000, Serbia}}
\begin{document}

\maketitle

\begin{abstract}

Classical knots in $\mathbb{R}^3$ can be represented by diagrams in the plane. These diagrams are formed by curves with a finite number of transverse crossings, where each crossing is decorated to indicate which strand of the knot passes over at that point. A {\em pseudodiagram} is a knot diagram that may be missing crossing information at some of its crossings. At these crossings, it is undetermined which strand passes over. Pseudodiagrams were first introduced by Ryo Hanaki in 2010. Here, we introduce the notion of a {\em pseudoknot}, i.e. an equivalence class of pseudodiagrams under an appropriate choice of Reidemeister moves. In order to begin a classification of pseudoknots, we introduce the concept of a {\em weighted resolution set}, or {\em WeRe-set}, an invariant of pseudoknots. We compute the WeRe-set for several pseudoknot families and discuss extensions of crossing number, homotopy, and chirality for pseudoknots.

\end{abstract}


\section{Introduction}

Recently, Ryo Hanaki introduced the notion of a {\em pseudodiagram} of a knot, link or spatial graph~\cite{hanaki}. A pseudodiagram of a knot is a knot diagram that may be missing some crossing information, as in Figure~\ref{f0}. In other words, at some crossings in a pseudodiagram it is unknown which strand passes over and which passes under. These undetermined crossings are called {\em precrossings}. Special classes of pseudodiagrams are knot diagrams and knot {\em shadows}, i.e. pseudodiagrams containing only precrossings. Pseudodiagrams were originally considered because of their potential to serve as useful models for biological objects related to DNA, but they are interesting objects in their own right.

\begin{figure}[th]
\centerline{\includegraphics[width=3.8in]{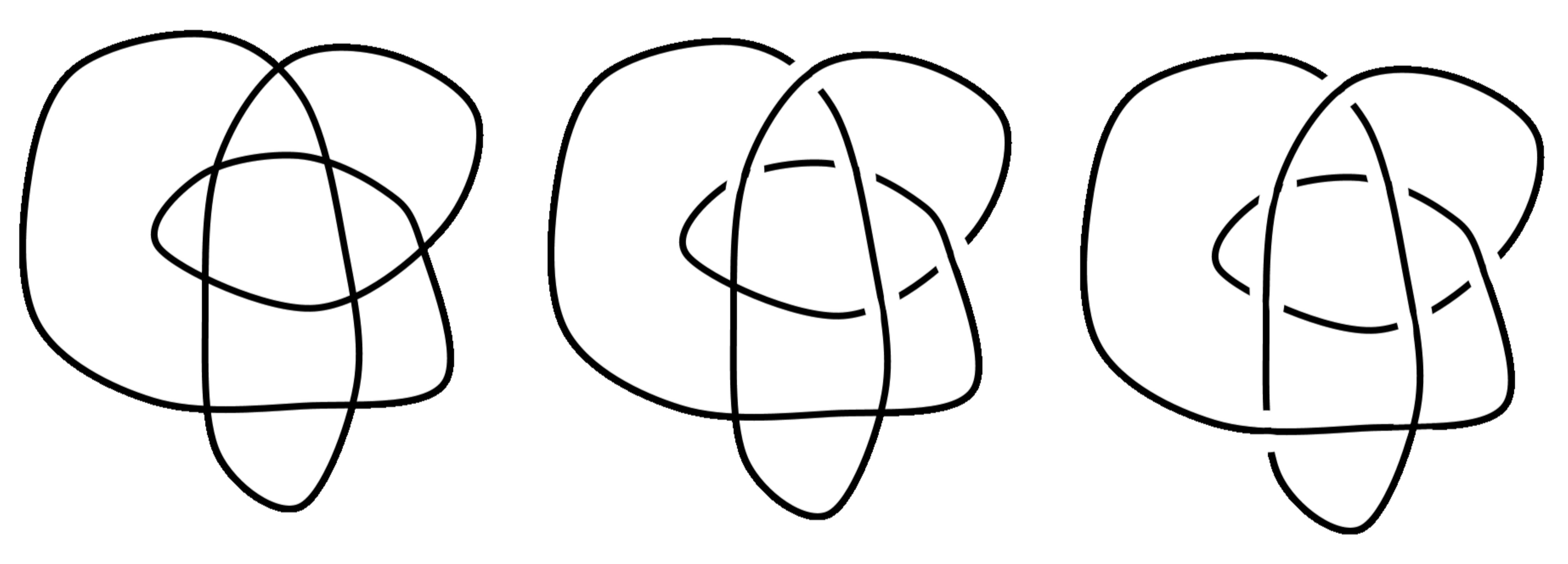}} \vspace*{8pt}
\caption{Examples of pseudodiagrams. \label{f0}}
\end{figure}

Of particular interest in pseudodiagram theory is the computation of the {\em trivializing number} and {\em knotting number} for pseudodiagrams. The trivializing number is the least number of precrossings that must be resolved into crossings in order to produce a pseudodiagram of the unknot. (That is, regardless of how the remaining precrossings are resolved, the unknot is always produced.) Similarly, the knotting number is the least number of precrossings that must be determined to produce a nontrivial knot. Much work has been done to analyze trivializing and knotting numbers~\cite{hanaki},~\cite{SMALL}.

For the purposes of this paper, we are interested in studying the knot theory that arises from considering equivalence classes of pseudodiagrams under equivalence relations generated by a natural set of Reidemeister moves. We refer to these objects as {\em pseudoknots}. Our choice of the set of Reidemeister moves for pseudoknots, pictured in Figure~\ref{rmoves}, was inspired by the theory of singular knots. Singular knots are knots that contain a finite number of self-intersections. If we view precrossings as singular crossings, we recover all of the pseudoknot Reidemeister moves, with the notable exception of the pseudo-Reidemeister I (PR1) move.

\begin{figure}[th]
\centerline{\includegraphics[width=5.8in]{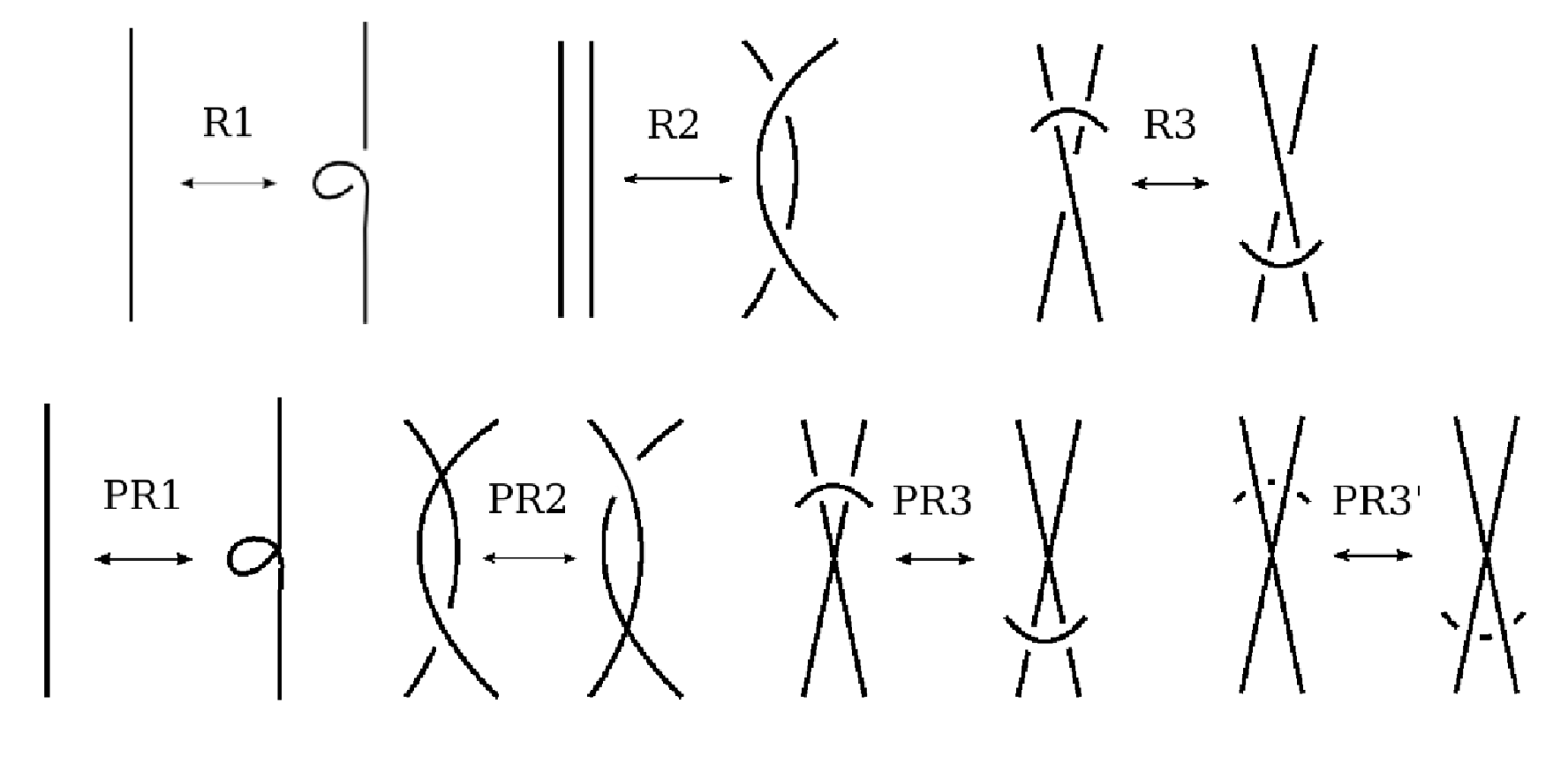}} \vspace*{8pt}
\caption{The pseudo-Reidemeister moves. \label{rmoves}}
\end{figure}

We note that the PR1 move ought to be included for the following reason. Consider a pseudodiagram $P$ and the pseudodiagram $P'$ that is related to $P$ by a PR1 move that introduces a new precrossing, $c$. Given a resolution of the precrossings in $P'$, the canonical corresponding resolution of the precrossings in $P$ produces the same knot type, regardless of how $c$ was resolved in $P'$. So, in an important sense, PR1 preserves the knot type of a pseudodiagram.

Now that we have defined pseudodiagrams, we'd like to learn what we can about their classification. We'd also like to extend several classical knot theoretical notions to our new setting. In Section~\ref{WeRe}, we introduce the primary invariant we use to classify pseudoknots, and in Section~\ref{families}, we discuss the classification of several pseudoknot families. We then turn to the extension of the crossing number in Section~\ref{crossnum}. Section~\ref{hom} is concerned with the homtopy of pseudoknots, and we propose a notion of chirality in Section~\ref{chiral}. We conclude with a few open questions and provide an appendix for tables of pseudoknots with up to five crossings.

\section{Weighted Resolution Sets}\label{WeRe}

As with any knot theory, the primary question in pseudoknot theory asks how we might classify pseudoknots. That is, given any two distinct pseudoknots, how might we prove that they are distinct. One partial answer is to consider the set of all possible knots that can be produced by resolving all precrossings in a diagram of the pseudoknot. A more sophisticated answer is to consider the invariant we call the {\em weighted resolution set} of a pseudoknot.

\begin{definition}  The {\em  weighted resolution set} (or {\em WeRe-set}) of a pseudoknot $P$ is the set of ordered pairs $(K, p_K)$ where $K$ is a resolution of $P$ (i.e. a choice of crossing information for every precrossing) and $p_K$ is the probability that $K$ is obtained from $P$ by a random choice of crossing information, assuming that positive and negative crossings are equally likely. \end{definition}

To illustrate this definition, consider the pseudoknot $T$ in Figure~\ref{part_tref}. There are $2^2=4$ ways to resolve the precrossings in the diagram. Three of the four resolutions result in the unknot, $0_1$, while one resolution results in the trefoil, $3_1$. Thus, the WeRe-set for this example is $\{(0_1, \frac{3}{4}),(3_1,\frac{1}{4})\}$. Note that if we resolve one of the precrossings of $T$ to be a positive crossing, the WeRe-set of the resulting pseudoknot is $\{(0_1, \frac{1}{2}),(3_1,\frac{1}{2})\}$. This shows that the WeRe-set is indeed a more powerful invariant than the (unweighted) resolution set, as the added probabilities can distinguish these two pseudoknots.

\begin{figure}[th]
\centerline{\includegraphics[width=1in]{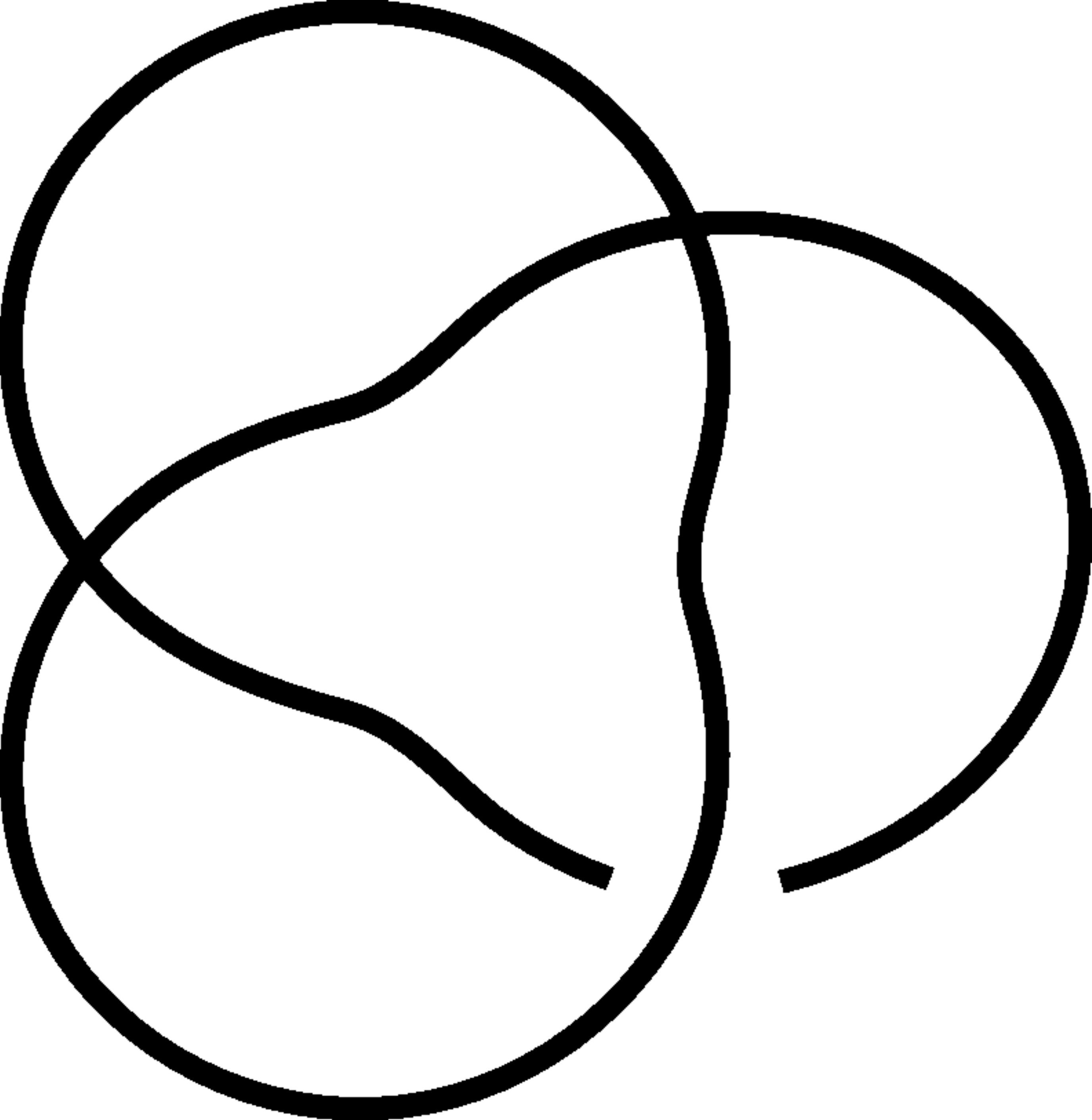}} \vspace*{8pt}
\caption{A pseudoknot, $T$, that produces the unknot and the trefoil. \label{part_tref}}
\end{figure}

\begin{theorem}
The WeRe-set is an invariant of pseudoknots.
\end{theorem}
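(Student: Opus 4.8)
The plan is to show that the WeRe-set is unchanged under each of the pseudo-Reidemeister moves in Figure~\ref{rmoves}. Since pseudoknot equivalence is generated by these moves (together with planar isotopy, which obviously does nothing to resolutions), it suffices to check invariance one move at a time: if $P$ and $P'$ differ by a single pseudo-Reidemeister move, I want to exhibit a bijection between the resolutions of $P$ and those of $P'$ that preserves both the resulting knot type and the probability weight.

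First I would dispose of the classical Reidemeister moves R1, R2, R3, which involve only honest crossings and no precrossings. Here the set of precrossings of $P$ and $P'$ is literally the same, so resolutions correspond trivially (the identity bijection on the precrossing data), the weights $p_K = 2^{-n}$ (times multiplicity) are identical, and each resolution $K$ of $P$ is related to the corresponding resolution $K'$ of $P'$ by the very same classical Reidemeister move; hence $K$ and $K'$ are the same knot type and the WeRe-sets agree.

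Next, the moves that genuinely involve precrossings: PR2, PR3, and the mixed moves where a strand slides over/under a precrossing. The key observation is that each such move can be ``resolved'': replacing the precrossing(s) involved by either choice of crossing information turns the pseudo-move into an ordinary Reidemeister move (or a composition of them) on classical diagrams. Concretely, for PR2 (two precrossings created/destroyed), a resolution of $P'$ assigns signs to the two new precrossings; the canonical corresponding resolution of $P$ omits those two, and for \emph{every} choice of the two signs the two resolved classical diagrams differ by an ordinary R2 move, so they have the same knot type. Summing the $2^{-n}$ weights, the two new precrossings contribute a factor $4$ to the count of resolutions of $P'$ mapping to a given $K$, exactly compensating the factor $2^{-2}$ in each weight; so the weight attached to $K$ is the same for $P$ and $P'$. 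The same bookkeeping handles PR3 and the over/under-slide moves: in each case one sets up the obvious map on resolutions (identity on untouched precrossings, and on the precrossings involved in the move one uses the ``canonical corresponding resolution'' already mentioned in the introduction for PR1), checks that it is a weight-preserving bijection, and checks that corresponding resolved diagrams are classically Reidemeister-equivalent.

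The main obstacle --- really the only place requiring care rather than bookkeeping --- is verifying, move by move, that for \emph{every} resolution of the precrossings touched by the move the two resolved classical diagrams are genuinely Reidemeister equivalent, i.e. that the local picture of each pseudo-move becomes a valid classical move (or sequence of moves) under all resolutions simultaneously. This is a finite check: each move has only a bounded number of precrossings, hence a bounded number of resolutions to inspect, and in each case the resolved local tangle is a standard Reidemeister configuration. Once this is in hand, invariance of the weights is automatic from the counting argument above, and the theorem follows.
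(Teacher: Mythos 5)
Your overall strategy is the same as the paper's: check invariance move by move by exhibiting a weight-preserving correspondence between resolutions under which corresponding resolved diagrams have the same knot type. The classical moves, the PR3 moves, and (implicitly, via your counting argument) PR1 are handled correctly this way. The genuine gap is in your treatment of PR2. You describe PR2 as a move that creates or destroys \emph{two precrossings} and assert that ``for every choice of the two signs the two resolved classical diagrams differ by an ordinary R2 move.'' That claim is false: if the two precrossings of such a poke are resolved to alternate, the result is a clasp, which is not removable by R2 and in general changes the knot type. This is precisely why an R2-type move introducing two precrossings is \emph{not} among the pseudo-Reidemeister moves (just as there is no such move for singular knots).

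The actual PR2 move involves one precrossing and one classical crossing on each side, and here your proposed bookkeeping needs a real modification, not just a finite check. The ``identity'' bijection on the resolution of that precrossing does not preserve knot type: resolving the precrossing the same way before and after the move can turn a reducible pair of crossings into a clasp. The paper's proof instead pairs the resolutions crosswise --- the unique resolution yielding alternating crossings before the move is matched with the unique alternating resolution after the move (these local tangles are identical), and the two non-alternating resolutions are matched with each other (both simplify by R2). Your framework can absorb this once the bijection is chosen correctly, but as written the PR2 step both misidentifies the move and relies on a false statement, so the proof is not complete.
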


\begin{proof} It suffices to show that the WeRe-set of a pseudoknot is unchanged by the pseudo-Reidemeister moves.  We will in fact show that all moves except the PR1 move preserve the resolution multiset (i.e. the set of knots obtained by resolving precrossings in all possible ways, with multiplicity). Therefore, they preserve the WeRe-set.

First, all classical Reidemeister moves clearly preserve the resolution multiset since they involve no precrossings.

The two PR3 moves behave much like the classical moves. Regardless of which resolution is chosen for the single precrossing, a classical R3 move is possible on the resolution, so the knot type is unchanged. Thus the WeRe-set is also unchanged.

For PR2, we note that, both before and after the move, there are two possibilities for the precrossing resolution of the pseudoknot. In each case, there is precisely one precrossing resolution that yields alternating crossings while the other yields a local diagram that R2 may be applied to. The alternating resolution that is obtained before the move is identical to the one obtained after the move. Thus, the knot types for this choice of resolution before and after the move are identical. The non-alternating choice of resolution also produces the same knot both before and after the PR2 move because a simplifying R2 move is possible in each case. It follows that the resolution multiset is unaffected by the PR2 move, and thus so is the WeRe-set.

Finally, we consider the PR1 move. We note that either resolution of the precrossing can be removed with a simplifying classical R1 move, so this move does not change the knot type of any resolution. Since we have two choices for the precrossing in PR1, the multiplicity of every resolution is increased by a factor of two after the move that adds a precrossing is performed. Since doubling the multiplicity of each knot in the resolution multiset does not affect the ratios of the resolutions, the WeRe-set is unchanged.
\end{proof}

\section{Pseudoknot Families and Conway Notation}\label{families}

To help us understand the relationship between pseudoknots and their WeRe-sets, we compute the WeRe-sets for the shadows of various families of knots. Note that whenever we refer to such a shadow, we are considering the pseudoknot associated to a shadow of the standard projection of the knot.

\subsection{Torus Pseudoknots}

We begin by considering torus knots, with a focus on $(2,p)$-torus knots. A $(2,p)$-torus knot is particularly straightforward to analyze since it is the closure of a $2$-braid (i.e. a braid with two strands) that has an odd number of crossings. (Recall that the closure of a $2$-braid with an even number of crossings produces a link.) To determine the resolutions of a $(2,p)$-torus shadow, it suffices to consider the shadow of its corresponding $2$-braid.


It is well known that braids can be represented by elements of a group called the {\em braid group}~\cite{birman}. For example, a $2$-braid can be represented by a word in the generators $\sigma_1$ and $\sigma_1^{-1}$, where $\sigma_1$ represents a negative crossing between the two strands and $\sigma_1^{-1}$ represents a positive crossing. (Note that the fact that these generators are inverses follows from the R2 move.) All resolutions of the shadow of a $2$-braid, therefore, correspond to powers of $\sigma_1$. This leads us to the following result.

\begin{lemma} Suppose $B$ is the shadow of a $2$-braid with $n$ crossings. Then there are ${n\choose{k}}$ ways to resolve the precrossings to obtain the braid $\sigma_1^{n-2k}$. Moreover, if $B'$ is a pseudodiagram of a $2$-braid with $n$ precrossings and the classical crossings contribute a total of $\sigma_1^l$ to the braid word for $B'$, then there are $n \choose k$ ways to resolve $B'$ to get a braid with reduced word $\sigma_1^{l+n-2k}$.
\label{braid}\end{lemma}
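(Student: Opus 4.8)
The plan is to exploit the fact that the $2$-strand braid group $B_2$ is infinite cyclic, generated by $\sigma_1$, the only relation being $\sigma_1\sigma_1^{-1}=1$ (which, as noted just before the lemma, is exactly the content of the R2 move). Consequently every word in $\sigma_1^{\pm 1}$ reduces to a unique power $\sigma_1^m$, and $\sigma_1^m=\sigma_1^{m'}$ in $B_2$ precisely when $m=m'$. So to count the resolutions that produce a given braid, it suffices to count the resolutions whose associated word has a prescribed exponent sum.

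First I would set up the correspondence between resolutions and words. Reading along the braid $B$, a choice of crossing information at each of the $n$ precrossings replaces that precrossing by either a negative crossing $\sigma_1$ or a positive crossing $\sigma_1^{-1}$, producing a length-$n$ word $w$ in $\sigma_1^{\pm 1}$. If exactly $k$ of the precrossings are resolved to positive crossings (hence $n-k$ to negative crossings), then the exponent sum of $w$ is $(n-k)-k=n-2k$, so $w$ reduces to $\sigma_1^{n-2k}$ in $B_2$, independently of the order in which the resolved crossings occur. The number of resolutions with exactly $k$ positive resolutions is the number of $k$-element subsets of the precrossing set, namely ${n\choose k}$. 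Since the reduced word depends only on $k$, and distinct values of $k\in\{0,1,\ldots,n\}$ yield distinct powers of $\sigma_1$ (hence distinct braids), this shows that exactly ${n\choose k}$ resolutions of $B$ produce $\sigma_1^{n-2k}$, proving the first claim.

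For the second claim, the classical crossings of $B'$ contribute a fixed sub-word, which reduces to $\sigma_1^l$ by hypothesis. Because $B_2$ is abelian, the total braid word obtained from any resolution of $B'$ — an interleaving of this fixed sub-word with the word coming from the $n$ resolved precrossings — reduces to $\sigma_1^l\cdot\sigma_1^{n-2k}=\sigma_1^{l+n-2k}$, where again $k$ is the number of precrossings resolved to positive crossings. The same counting argument as above then shows there are ${n\choose k}$ resolutions of $B'$ whose reduced word is $\sigma_1^{l+n-2k}$.

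I expect no serious obstacle here; the only point requiring care is the invocation that $B_2\cong\mathbb{Z}$, so that the exponent of $\sigma_1$ is a complete invariant of a $2$-braid and the counts for different values of $k$ do not overlap. One should also fix the convention (used implicitly above and in the paragraph preceding the lemma) that a positive crossing corresponds to $\sigma_1^{-1}$ and a negative crossing to $\sigma_1$, so that ``positive'' and ``negative'' resolutions are consistently identified throughout.
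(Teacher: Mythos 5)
Your proof is correct and follows the same approach as the paper: count resolutions by the number $k$ of positive resolutions, note that the exponent sum determines the reduced word $\sigma_1^{n-2k}$, and observe there are ${n\choose k}$ such choices, with the second claim following by appending the fixed contribution $\sigma_1^l$. You simply make explicit the facts (that $B_2\cong\mathbb{Z}$ and that distinct exponents give distinct braids) that the paper's terser proof leaves implicit.
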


\begin{proof} If we choose $n-k$ of the $n$ crossings to be negative (corresponding to $\sigma_1$) and $k$ to be positive (corresponding to $\sigma_1^{-1}$), then the reduced braid word corresponding to the resulting braid is $\sigma_1^{n-2k}$. Furthermore, there are ${n\choose{k}}$ ways to choose the $k$ positive crossings. The second statement follows as an immediate corollary.
\end{proof}

\begin{theorem} Every resolution of the shadow of a $(2,p)$-torus knot is a $(2,p-2k)$-torus knot, with $0\leq k \leq p$. Moreover, there are ${p\choose{k}}$ ways to obtain a $(2,p-2k)$ torus knot from the $(2,p)$-torus shadow. In particular, since the unknot is its own mirror image (as it can be represented both as a $(2,1)$- and $(2,-1)$-torus shadow), there are $2{p\choose {\lfloor p/2\rfloor}}$ ways to get the unknot.
\label{2ptorus}\end{theorem}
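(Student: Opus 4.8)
The plan is to reduce the whole statement to Lemma~\ref{braid} via the braid closure operation. First I would recall that (up to mirror image) the $(2,p)$-torus knot is by definition the closure $\widehat{\sigma_1^{p}}$ of the $2$-braid $\sigma_1^p$, and that here $p$ is odd, so that the closure is a knot rather than a two-component link. Its shadow is the closure of the shadow $B$ of the $2$-braid with $p$ crossings, and a resolution of the torus shadow is precisely the closure of a resolution of $B$. By Lemma~\ref{braid}, the resolutions of $B$ are exactly the braids $\sigma_1^{p-2k}$ for $0\le k\le p$, and exactly $\binom{p}{k}$ of them yield $\sigma_1^{p-2k}$. Taking closures, every resolution of the $(2,p)$-torus shadow is $\widehat{\sigma_1^{p-2k}}$, i.e.\ the $(2,p-2k)$-torus knot, with $\binom{p}{k}$ resolutions producing the word $\sigma_1^{p-2k}$. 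Since $p$ is odd, each exponent $p-2k$ is odd, so every resolution is genuinely a knot; this establishes the first two sentences. As a consistency check, $\sum_{k=0}^{p}\binom{p}{k}=2^{p}$ matches the total number of resolutions.

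For the unknot count, I would first observe that among the $(2,q)$-torus knots with $q$ odd, the only unknotted ones are $(2,1)$ and $(2,-1)$ (the closures of $\sigma_1^{\pm1}$), while all the others are nontrivial and pairwise non-isotopic; hence no two distinct values of $k$ produce the same resolution except the two that give the unknot. So the unknot arises exactly from those $k$ with $p-2k=1$ or $p-2k=-1$. Writing $p=2m+1$, these are $k=m=\lfloor p/2\rfloor$ and $k=m+1$, contributing $\binom{p}{m}+\binom{p}{m+1}$ resolutions. The final step is the binomial identity $\binom{2m+1}{m+1}=\binom{2m+1}{m}$ (symmetry of the odd row of Pascal's triangle), which gives $\binom{p}{m}+\binom{p}{m+1}=2\binom{p}{\lfloor p/2\rfloor}$, as claimed.

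The combinatorics here is entirely supplied by Lemma~\ref{braid}, so the only real content to be careful about is the knot-theoretic input: that braid closure carries resolutions of the shadow to resolutions of the knot (immediate from the definitions), and, more substantively, that the $(2,q)$-torus knots with $q$ odd and $|q|\ge 3$ are all nontrivial and mutually non-isotopic, so that $q=\pm1$ are the unique sources of the unknot and the count $2\binom{p}{\lfloor p/2\rfloor}$ is not an overcount. This last point is the main (if mild) obstacle; it is classical — for instance the closure of $\sigma_1^{q}$ has Seifert genus $(|q|-1)/2$ and is chiral for $|q|\ge 3$, which separates all these knots and distinguishes them from the unknot — so I would simply cite it rather than prove it. Everything else is bookkeeping.
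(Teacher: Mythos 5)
Your proof is correct and takes essentially the same route as the paper's: reduce everything to Lemma~\ref{braid} via braid closure. You additionally spell out the unknot count (solving $p-2k=\pm 1$ and using $\binom{2m+1}{m}=\binom{2m+1}{m+1}$ to get $2\binom{p}{\lfloor p/2\rfloor}$) and the non-isotopy of the distinct $(2,q)$-torus knots, details the paper's proof leaves implicit.
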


\begin{proof} A $(2,p)$-torus knot is the closure of the 2-braid with $p$ crossings corresponding to the braid word $\sigma_1^p$. By the previous lemma, there are $p\choose k$ ways to get the braid $\sigma_1^{p-2k}$ from the shadow of the 2-braid with $p$ crossings, and the closures of these braids are precisely the $(2,p-2k)$-torus knots. Clearly, no other knot types are obtainable from the standard projection, since every resolution is the closure of a $2$-braid.
\end{proof}

\begin{corollary} The knotting number of the shadow of a $(2,p)$-torus knot is equal to $\frac{p+3}{2}$.
\end{corollary}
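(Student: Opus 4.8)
The plan is to read off the possible resolutions from Theorem~\ref{2ptorus} and then run a direct min--max analysis of which partial resolutions force a nontrivial knot. Throughout I take $p \geq 3$ odd (for $p=1$ no resolution of the shadow is knotted, so the statement is vacuous there). By Theorem~\ref{2ptorus} (via Lemma~\ref{braid}), a resolution of the $(2,p)$-torus shadow is determined up to knot type by the number $k$ of precrossings resolved to positive crossings, namely the $(2,p-2k)$-torus knot. Since $p$ is odd, $p-2k$ is always odd and hence never $0$, so this resolution is nontrivial exactly when $|p-2k|\geq 3$, i.e. when $k\leq \frac{p-3}{2}$ or $k\geq \frac{p+3}{2}$; equivalently, the only resolutions giving the unknot are those with $k\in\{\frac{p-1}{2},\frac{p+1}{2}\}$.

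For the lower bound, suppose we determine some set of $m$ precrossings, say with $c$ of them chosen to be positive, where $0\leq c\leq m$. The remaining $p-m$ precrossings can then be resolved so that the total positive count $k$ takes any value in $[c,\,c+p-m]$. I would check that since $\frac{p-1}{2}$ and $\frac{p+1}{2}$ are consecutive integers, this interval of consecutive integers avoids both of them only if it lies entirely at or below $\frac{p-3}{2}$ or entirely at or above $\frac{p+3}{2}$; the first forces $c\leq m-\frac{p+3}{2}$ and the second forces $c\geq \frac{p+3}{2}$, and in either case (using $0\leq c\leq m$) we get $m\geq \frac{p+3}{2}$. Hence whenever $m<\frac{p+3}{2}$, some completion of the resolution yields the unknot, so $m$ determined precrossings never force a nontrivial knot; therefore the knotting number is at least $\frac{p+3}{2}$.

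For the upper bound, resolve $\frac{p+3}{2}$ of the precrossings all to positive crossings. Then every completion has $k\geq \frac{p+3}{2}$, so every resolution is a $(2,p-2k)$-torus knot with $p-2k\leq -3$, which is nontrivial. Thus $\frac{p+3}{2}$ determined precrossings suffice, and combining with the lower bound gives the claimed equality.

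The main obstacle is simply the interval/parity bookkeeping in the lower bound: one must use that $p$ is odd so that the two forbidden counts are consecutive integers, which is exactly what prevents an integer interval from straddling them without being short (i.e. without $p-m$ being small). It is also worth noting explicitly, for honesty about the statement, that ``the knotting number of the shadow'' refers to this particular standard shadow diagram, since the knotting number—unlike the WeRe-set—is a feature of a pseudodiagram rather than an invariant of the underlying pseudoknot.
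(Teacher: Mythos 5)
Your proposal is correct and follows essentially the same route as the paper: reduce to the exponent of the braid word via Lemma~\ref{braid} and require $|p-2k|\geq 3$, with the extremal choice $k=\frac{p+3}{2}$ giving the upper bound. Your lower-bound argument (tracking the achievable interval $[c,\,c+p-m]$ of positive-crossing counts and using that the two unknot values $\frac{p-1}{2},\frac{p+1}{2}$ are consecutive) is actually more complete than the paper's, which only treats the case where all determined crossings get the same sign and asserts minimality without argument.
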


\begin{proof} Any crossing can be removed by a suitable resolution of an adjacent precrossing. So if we have chosen $k$ crossings (say, positively) and have $p-k$ left to choose, the difference must be at least $3$ to ensure that a nontrivial knot is produced. If we let $k=\frac{p+3}{2}$, then $$k-(p-k) =\frac{p+3}{2}-\frac{p-3}{2}=3.$$ This is the least number of crossing choices we can make to be sure we have a knot.
\end{proof}

While the WeRe-sets of $(2,p)$-torus knot shadows are straightforward to describe, other kinds of torus knot shadows present more of a challenge. One interesting feature of the $(2,p)$-torus knots is that every resolution is itself a $(2,p)$-torus knot for some $p$. For the $(3,4)$-torus shadow, however, a similar result fails to hold.

\begin{example}[The (3,4)-torus knot.]
The WeRe-set of the shadow of the (3,4)-torus knot  is

\footnotesize

$$\{(0_1,{88\over 2^8}),(3_1,{72\over 2^8}),(4_1,{4\over 2^8}),(5_1,{16\over 2^8}),(5_2,{32\over 2^8}), (6_3,{16\over 2^8}),(8_{18},{2\over 2^8}),(8_{19},{2\over 2^8}),(8_{20},{16\over 2^8}),(3_1\# 3_1,{8\over 2^8})\}.$$

\normalsize

\noindent We note that it contains knots which are not torus knots.
\end{example}

The $(3,7)$-torus knot is also of particular interest.

 \begin{example}[The (3,7)-torus knot.]
The probability of obtaining an unknot from a knot shadow is higher than the probability of obtaining any other knot type for shadows with up to 12 crossings. An exception to this rule is the shadow of the $(3,7)$-torus knot with the WeRe-set $$\{(0_1,{2688 \over 2^{14}}),(3_1,{2884 \over 2^{14}}),\ldots \}.$$
\end{example}

\begin{figure}[th]
\centerline{\includegraphics[width=3.4in]{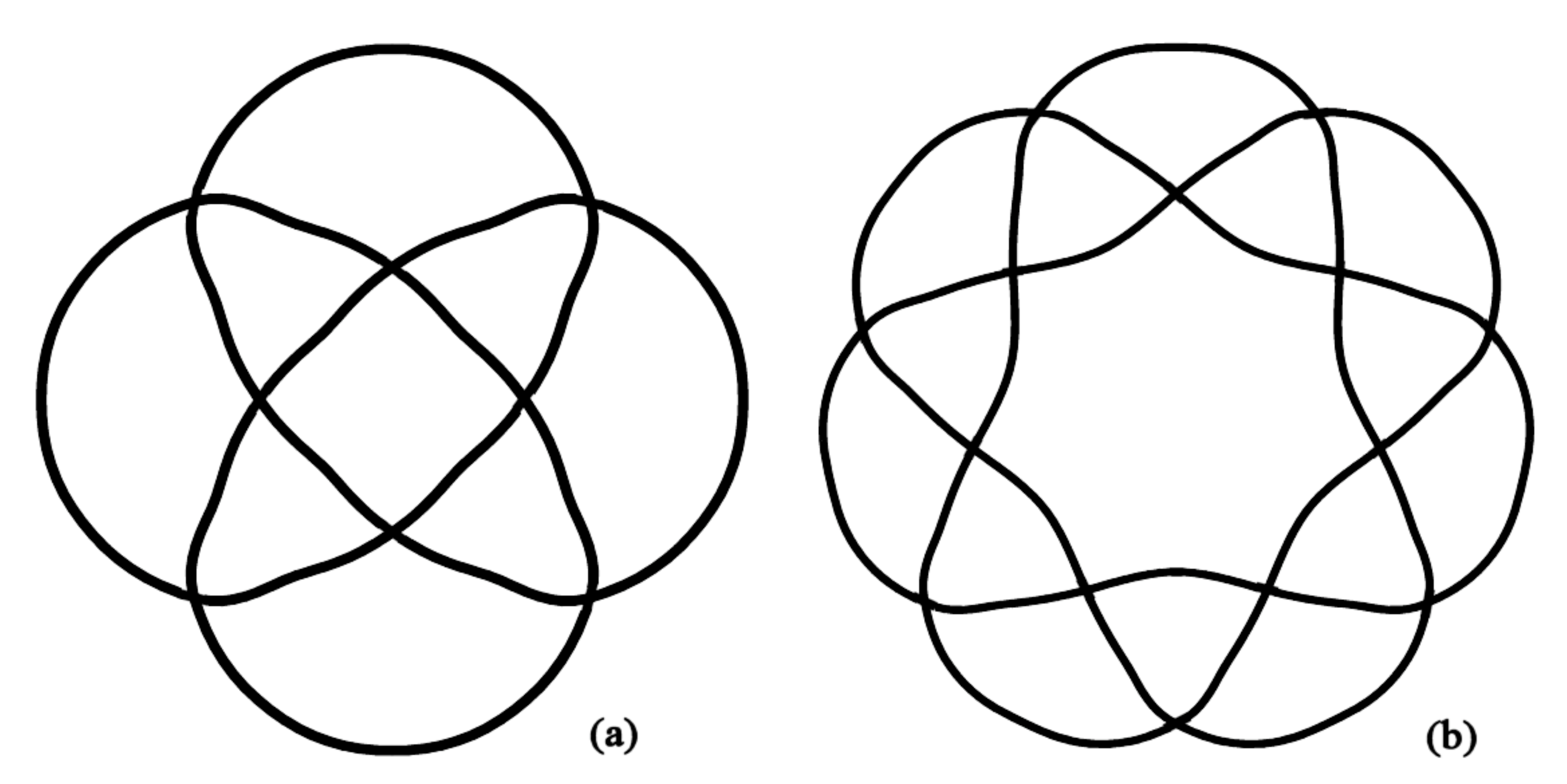}} \vspace*{8pt}
\caption{The shadows (a) of the $(3,4)$-torus knot; (b) of the $(3,7)$-torus knot. \label{f0}}
\end{figure}

\subsection{Rational Pseudoknots}

We turn our attention from pseudoknots related to torus knots to other interesting knot families. We consider the concrete example of shadows of twist knots (as in Figure~\ref{twist}) then discuss rational pseudoknots more generally. For more on rational and twist knots, see~\cite{adams}.

\begin{figure}[th]
\centerline{\includegraphics[width=1.3in]{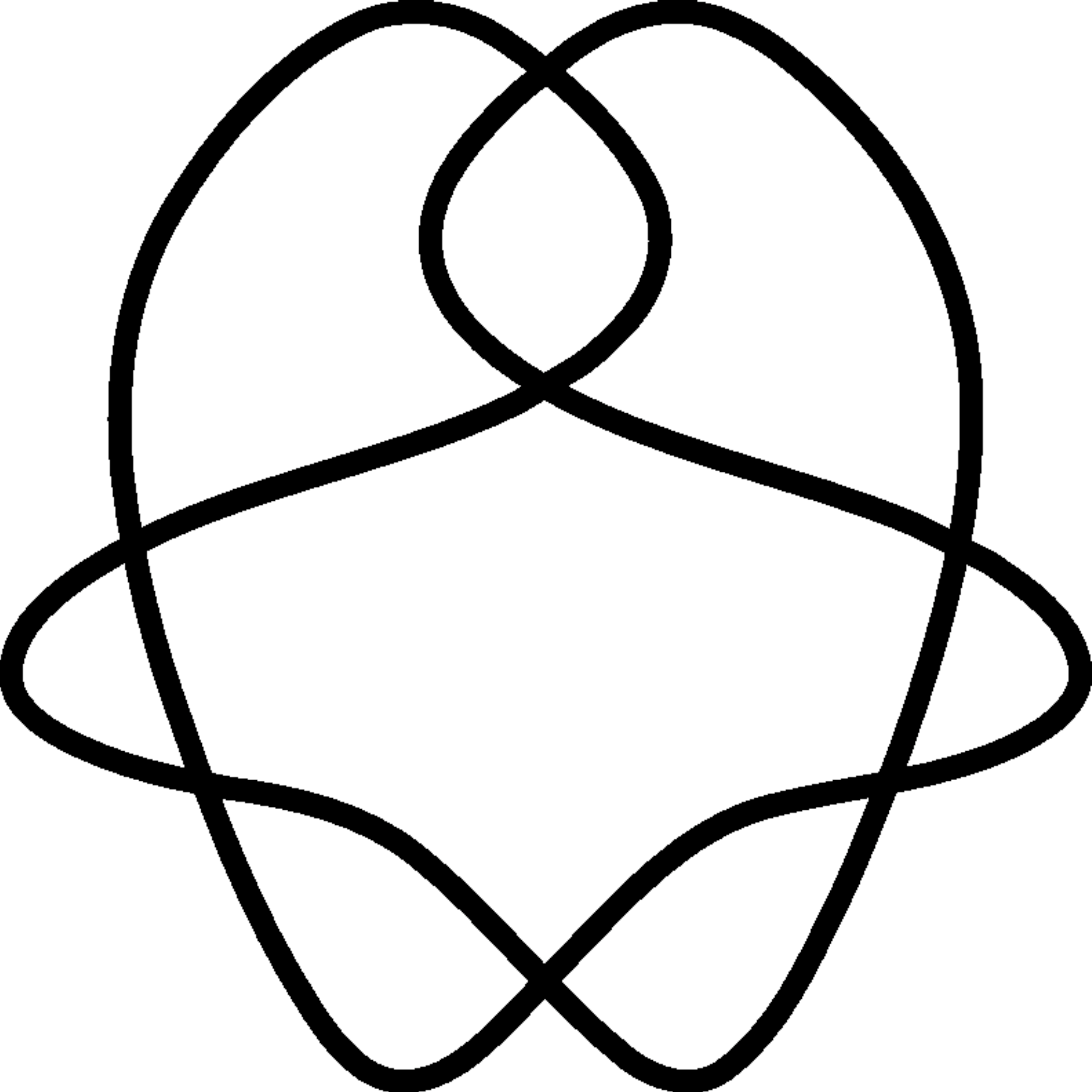}} \vspace*{8pt}
\caption{The shadow of a twist knot. \label{twist}}
\end{figure}

\begin{theorem} Every resolution of a twist knot shadow with $n$ crossings is either a twist knot with $n$ or fewer crossings or the unknot. The number of ways to obtain a twist knot with $n-2k$ crossings by resolving the shadow of a twist knot with $n$ crossings is $2{{n-2} \choose{k}}$. The number of ways to obtain the unknot is $2^{n-1}+ 2{{n-2} \choose{\lfloor n-2/2\rfloor }}$.
\end{theorem}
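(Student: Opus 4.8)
The plan is to handle a standard twist-knot shadow the way the $(2,p)$-torus shadow was handled: decompose it (see Figure~\ref{twist}) into a \emph{clasp} made of two precrossings and a \emph{twist region} made of the remaining $n-2$ precrossings, analyze the two pieces separately, and reassemble. The twist region is the shadow of a $2$-braid with $n-2$ crossings, so the enumeration of Lemma~\ref{braid} applies essentially verbatim: after simplifying $R2$ moves, a resolution of the twist region reduces to a residual signed twist count $t=(n-2)-2j$, realized in exactly $\binom{n-2}{j}$ ways, with $t$ ranging over $\{-(n-2),-(n-4),\dots,n-2\}$. Every resolution of the whole shadow is thus recorded by this value $t$ together with one of the four resolutions of the clasp.

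Next I would analyze the clasp. Two of its four resolutions leave a genuine clasp (one of each sign); the other two admit a simplifying $R2$ move that deletes the clasp outright. In the deleting case the diagram collapses to a closed-up twist region carrying $t$ half-twists, which is the unknot regardless of $t$ (the twists can simply be undone), and this already accounts for $2\cdot 2^{\,n-2}=2^{\,n-1}$ resolutions of the unknot. In the surviving case the resolution is a double-twist knot $J(\varepsilon,t)$ with $\varepsilon=\pm 2$, and the core of the proof is to name this knot type. I would do this via its rational-knot fraction: $J(2,t)$ has fraction $(2t+1)/2$ and $J(-2,t)$ has fraction $(2t-1)/2$, and the standard classification of rational knots then exhibits each as a specific twist knot, or, in the small cases, as the unknot, with its mirror image obtained by simultaneously negating $\varepsilon$ and $t$. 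In particular the resolutions $(+\text{clasp},\,t)$ and $(-\text{clasp},\,-t)$ give the two mirror images of a single twist knot, which is the source of the factor of $2$ in $2\binom{n-2}{k}$.

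Finally I would assemble the count. For each $k$ with $n-2k\ge 3$, the pair $(+\text{clasp},\,(n-2)-2k)$ and $(-\text{clasp},\,-(n-2)+2k)$ yields (the two mirror images of) a twist knot with $n-2k$ crossings, contributing $2\binom{n-2}{k}$ resolutions; the claim that nothing outside the twist-knot family can occur is then automatic, since every resolution has already been displayed as a double-twist knot with at most $n$ crossings. Gathering the leftover resolutions — the $2^{\,n-1}$ clasp deletions together with the genuine-clasp resolutions whose residual twist is too small to be knotted — and reorganizing the sum with a short Pascal-triangle identity produces the unknot count $2^{\,n-1}+2\binom{n-2}{\lfloor (n-2)/2\rfloor}$; a final check confirms the totals add up to $2^{\,n}$. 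The step I expect to be the real obstacle is the middle one: giving a uniform, careful identification of $J(\pm 2,t)$ across all sign regimes — in particular tracking how a clasp of the ``wrong'' sign relative to the twist region cancels against the neighboring twist crossings, and deciding exactly when this collapses the diagram to the unknot — and then making sure each resolution is charged to precisely one knot type in the final tally.
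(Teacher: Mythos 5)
Your proposal follows essentially the same route as the paper's proof: decompose the shadow into the clasp and the twist region, apply Lemma~\ref{braid} to the $n-2$ twist precrossings, and split the four clasp resolutions into the two that survive as genuine clasps (yielding the factor of $2$ via the two signs/mirror images) and the two that cancel by R2 (yielding $2^{n-1}$ unknots). In fact your write-up is more careful than the paper's, which does not explicitly justify the extra $2\binom{n-2}{\lfloor (n-2)/2\rfloor}$ unknots coming from genuine-clasp resolutions with too small a residual twist, nor the identification of the double-twist knots $J(\pm 2,t)$ that you carry out via rational fractions.
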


\begin{proof}
A twist knot is made up of a clasp and a twist, and each of these tangles can be viewed as braids. The clasp of a twist knot shadow can be resolved to alternate in either of two ways (corresponding to the braids $\sigma_1^2$ and $\sigma_1^{-2}$). It can also be resolved so that it is not alternating, in which case our resolution is the unknot.

By Lemma~\ref{braid}, there are $n \choose k$ ways for the twist tangle to be the braid $\sigma_1^{n-2k}$. If we have the braid $\sigma_1^{n-2k}$ and the clasp is alternating, we will either get a twist knot with $n-2k$ crossings or one with $n-2k-1$ crossings, depending on which way the clasp alternates. Since we can get both the twist knot with positive crossings and the one with negative crossings, we multiply by 2 to get the total number of possible resolutions that will give us a particular twist knot.
\end{proof}


\begin{figure}[th]
\centerline{\includegraphics[width=1.3in]{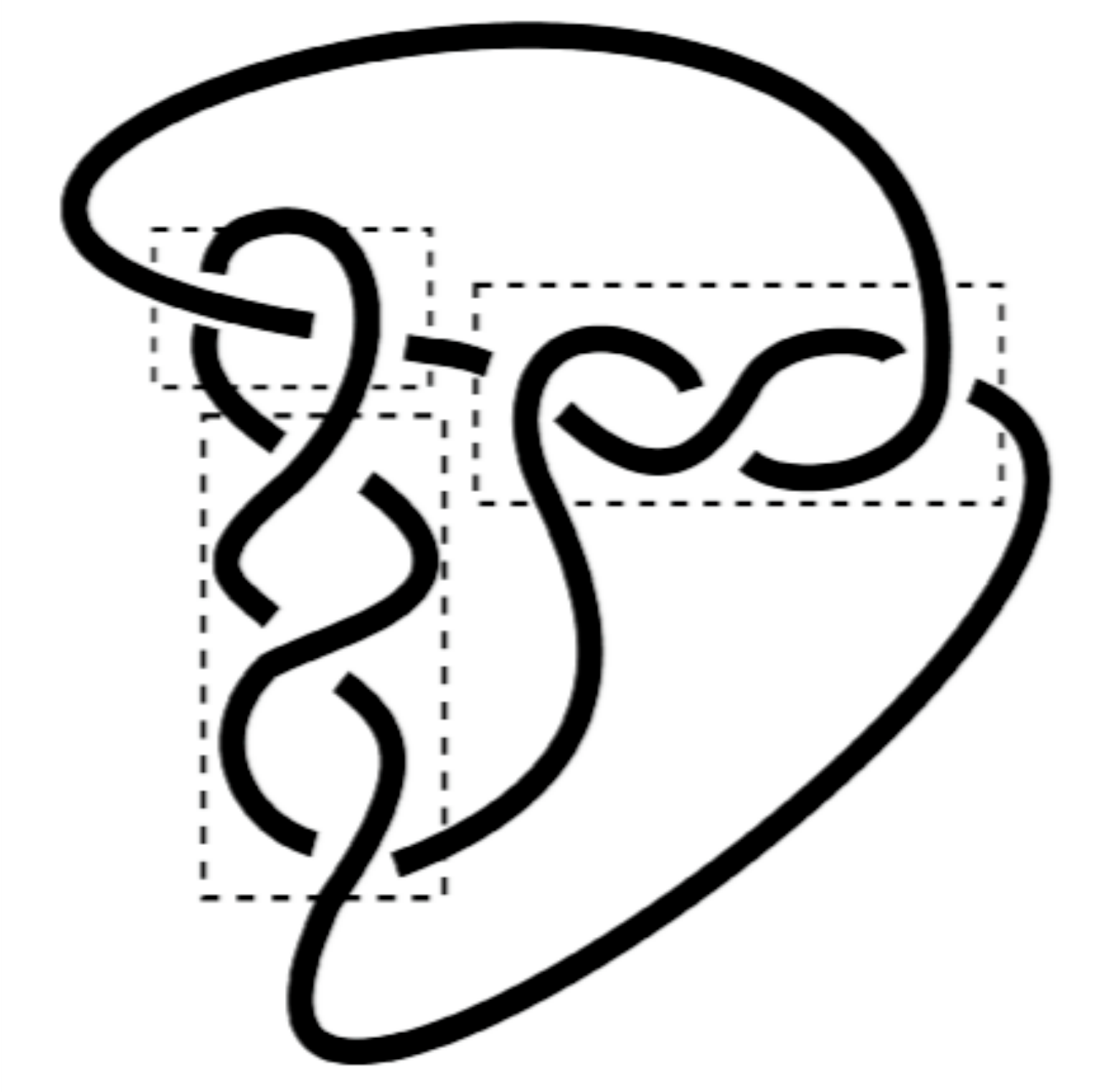}} \vspace*{8pt}
\caption{A rational knot. \label{rational}}
\end{figure}

Twist knots are a subfamily of the larger family of {\em rational knots}~\cite{conway,lousophia}. We can generalize our strategy of computing WeRe-sets for twist knot shadows to strategies for computing rational knot shadow WeRe-sets. Recall that a rational knot can be viewed as the closure of a rational tangle, that is, a tangle made up of a sequence of horizontal and vertical twists. See Figure~\ref{rational} for an example of a rational knot, and refer to~\cite{adams} for a detailed discussion of rational tangle construction.

The sequence of twists $a_1\, a_2\, ...\, a_n$ (meaning the first twist has $a_1$ crossings, the second has $a_2$ crossings, etc.) that creates a rational knot is key for classifying such knots. (Note that twist knots are associated to sequences of the form $a_1\, 2$.) From this sequence, we can form the following continued fraction, which is instrumental in determining information about knot type.

\begin{equation*} a_n + \frac{1}{ a_{n-1} + \frac{1}{ ...+\frac{1}{a_1}}}
\end{equation*}
If this is equal to $p/q$ for some relatively prime $p,q \in \mathbb{Z}$ with $p$ odd, then it is a knot. Moreover, a theorem by Schubert from~\cite{schubert} tells us exactly when two such knots will be equal.

\begin{theorem}[Schubert~\cite{schubert}]
Suppose that rational tangles with fractions $p/q$ and $p'/q'$ are given, where $p$ and $q$ are relatively prime as are $p'$ and $q'$. (The numerators are always assumed to be positive, and the denominators may be negative.) If $K(p/q)$ and $K(p'/q')$ denote the corresponding rational knots obtained by taking numerator closures of these tangles, then $K(p/q)$ and $K(p'/q')$ are topologically equivalent if and only if
\\1. $p=p'$ and
\\2. either $q \equiv q' \mod{p}$ or $qq' \equiv1 \mod{p}$.
\end{theorem}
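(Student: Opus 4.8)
The final statement is the classical Schubert classification of $2$-bridge (rational) knots, so the proposal is really to recall the standard proof via double branched covers. The two directions are of completely different character: the \emph{``if''} direction is elementary, consisting of explicit isotopies of diagrams, while the \emph{``only if''} direction is the substantive half and forces us to import the homeomorphism classification of lens spaces. (No elementary polynomial invariant --- Alexander polynomial, signature, etc.\ --- is strong enough to separate all inequivalent $K(p/q)$, which is why something like the branched cover is unavoidable.)

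For the ``if'' direction, I would check that each of the two congruences is realized by a manipulation of the rational tangle that leaves the numerator closure unchanged. The freedom $q'\equiv q \pmod p$ reflects the fact that, once the numerator closure is formed, a full twist inserted into a suitable pair of strands can be absorbed, changing $q$ by $p$ while preserving $p$ and the knot type; equivalently, it is the statement that altering the defining sequence $a_1\,a_2\,\cdots\,a_n$ by a move that shifts the continued fraction by an integer does not change $K(p/q)$. The relation $qq'\equiv 1\pmod p$, i.e.\ $q'\equiv q^{-1}\pmod p$, comes from rotating the whole tangle about its long axis: this visibly preserves the numerator closure but reverses the defining sequence to $a_n\,\cdots\,a_2\,a_1$, and by the standard identity on continuant polynomials the reversed sequence has continued fraction $p/q'$ with $qq'\equiv \pm1\pmod p$; a trivial lengthening of the sequence (using $[\ldots,a_n]=[\ldots,a_n\mp1,\pm1]$) fixes the sign. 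So conditions 1 and 2 together are realized by isotopies, and the ``if'' direction needs no $3$-manifold topology.

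For the ``only if'' direction the plan is: (1) identify the double cover $\Sigma$ of $S^3$ branched over $K(p/q)$ as the lens space $L(p,q)$. Here one uses that the double cover of a rational tangle, branched over its four endpoints and taken rel the bounding $2$-sphere, is a solid torus; hence $\Sigma$ is obtained by gluing the solid torus coming from the rational tangle $T$ to the solid torus coming from the trivial tangle used in the numerator closure, and chasing meridians through the continued-fraction construction shows the gluing slope is $q/p$, giving $L(p,q)$. (2) Since $\Sigma$ is an invariant of the knot, equivalent knots have \emph{orientation-preservingly} homeomorphic $\Sigma$'s; then $p=|H_1(\Sigma)|=p'$ (this is just the determinant of the knot), and the classical classification of lens spaces up to orientation-preserving homeomorphism --- $L(p,q)\cong L(p,q')$ iff $q'\equiv q^{\pm1}\pmod p$, proved via Reidemeister torsion or the linking form on $H_1$ --- yields condition 2. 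One also needs that the covering involution on $L(p,q)$ is unique up to conjugacy (true for $p>2$), so a homeomorphism of the branched covers can be taken equivariant and therefore descends to $S^3$; this is what converts ``the lens spaces agree'' back into ``the knots agree.''

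The main obstacle is concentrated entirely in step (2): it rests on two nontrivial classical theorems --- the homeomorphism classification of lens spaces and the uniqueness of the branching involution --- and on careful bookkeeping in step (1) so that \emph{orientation-preserving} equivalence of knots produces exactly $q^{\pm1}$ and not also $-q^{\pm1}$, the latter congruence corresponding to the mirror image $K(p/(p-q))$. A self-contained alternative is a purely $3$-dimensional argument using the uniqueness up to isotopy of the $2$-bridge sphere in the knot exterior, which avoids lens spaces at the cost of proving that uniqueness; either way the difficulty is the same in spirit. Given the scope of the present paper, I would in fact simply cite Schubert for the ``only if'' direction and present only the elementary continued-fraction argument for ``if.''
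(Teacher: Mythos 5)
The paper does not prove this statement at all: it is imported verbatim as a classical result of Schubert, with only a citation to \cite{schubert} (as restated by Kauffman--Lambropoulou), and is then used as a black box to compute WeRe-sets of rational shadows. So your concluding instinct --- cite Schubert for the hard direction and at most sketch the elementary tangle isotopies for the ``if'' direction --- is exactly what the authors do, except that they do not even include the ``if'' direction. Your outline itself is the standard and correct architecture: the ``if'' direction via absorbing full twists (the $q\mapsto q+p$ move) and rotating the tangle (the $q\mapsto q^{-1}$ move, with the continuant identity fixing the sign), and the ``only if'' direction via identifying the double branched cover as $L(p,q)$, invoking the classification of lens spaces up to orientation-preserving homeomorphism, and using uniqueness of the branching involution to descend back to $S^3$. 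You correctly isolate where the real content lies (the lens space classification and the equivariance step) and correctly flag the orientation bookkeeping needed to avoid conflating $K(p/q)$ with its mirror $K(p/(p-q))$. As a blind reconstruction of how one \emph{would} prove the cited theorem, there is no gap at the level of detail offered; within the context of this paper, no proof was expected.
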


Schubert's theorem, along with the following theorem, help us determine the WeRe-sets for shadows of rational knots.

\begin{theorem} Suppose $S$ is the (canonical) shadow of the rational knot with sequence $a_1\, a_2\, ...\, a_n$. Then the number of resolutions of $S$ that yield the rational knot $(a_1-2k_1)\, (a_2-2k_2)\, ...\, (a_n-2k_n)$ is $$\prod_{i=1}^n{a_i \choose k_i}.$$
\end{theorem}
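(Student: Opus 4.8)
The plan is to reduce the statement to an iterated application of Lemma~\ref{braid}, treating each of the $n$ twist regions independently. A rational tangle with sequence $a_1\,a_2\,\ldots\,a_n$ is built from $n$ consecutive groups of parallel twists, where the $i$-th group consists of $a_i$ crossings that are all ``parallel'' in the sense that, as a $2$-strand braid in the appropriate (horizontal or vertical) direction, they contribute $\sigma_1^{\pm a_i}$. The shadow $S$ replaces every one of these $\sum a_i$ crossings by a precrossing, and a resolution of $S$ is exactly a choice of crossing type at each precrossing. Because the twist groups are laid out in disjoint regions of the diagram, a resolution decomposes as an independent choice within each group.

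First I would make precise the claim that, within the $i$-th twist group, Lemma~\ref{braid} applies: the $a_i$ precrossings there form the shadow of a $2$-braid (in the horizontal or vertical direction, alternating with $i$), so by the lemma there are $\binom{a_i}{k_i}$ ways to resolve them so that the reduced braid word is $\sigma_1^{a_i - 2k_i}$ — i.e.\ so that the net number of twists in that region, with sign, is $a_i - 2k_i$. Next I would invoke the standard fact (from the theory of rational tangles, e.g.~\cite{adams}) that the topological type of the rational tangle, and hence of its numerator closure, depends only on the sequence of \emph{net} twist counts $(a_1 - 2k_1, a_2 - 2k_2, \ldots, a_n - 2k_n)$; in other words, resolving each group to its reduced braid word $\sigma_1^{a_i - 2k_i}$ yields precisely the rational knot with sequence $(a_1-2k_1)\,(a_2-2k_2)\,\ldots\,(a_n-2k_n)$. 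Since the choices in distinct groups are independent, the number of resolutions of $S$ realizing the net-twist vector $(a_1-2k_1,\ldots,a_n-2k_n)$ is the product $\prod_{i=1}^n \binom{a_i}{k_i}$, which is exactly the claimed count.

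One subtlety I would want to flag explicitly is that the theorem counts resolutions yielding a given rational knot \emph{presented by the specific sequence} $(a_1-2k_1)\,\ldots\,(a_n-2k_n)$, not resolutions yielding a given \emph{knot type}; different vectors $(k_1,\ldots,k_n)$ can give Schubert-equivalent knots (or even sequences with a zero entry, which collapse the continued fraction), so to pass from this count to an actual WeRe-set entry one must sum $\prod \binom{a_i}{k_i}$ over all vectors producing fractions $p/q$ equivalent under Schubert's theorem. The theorem as stated sidesteps this, so the proof itself need not address it, but I would state the interpretation carefully.

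The main obstacle is the second step: justifying rigorously that a resolution of the shadow of the $i$-th twist group, once reduced via R2 moves, contributes exactly $\sigma_1^{a_i - 2k_i}$ in that region \emph{and} that these regional reductions can be carried out simultaneously inside the full rational tangle without interfering with one another. The first half is precisely Lemma~\ref{braid}; the second half requires observing that the R2 simplifications happen in disjoint portions of the diagram (the twist regions are separated by the tangle's structure), so they commute and the resulting diagram is the canonical rational tangle diagram for the reduced sequence. I expect this to be the only place where one must be careful about the geometry rather than just bookkeeping binomial coefficients.
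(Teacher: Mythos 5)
Your proposal is correct and follows essentially the same route as the paper: decompose the shadow into its $n$ twist regions, apply Lemma~\ref{braid} to each to count the $\binom{a_i}{k_i}$ resolutions giving net twist $a_i-2k_i$, and multiply by independence. The paper's proof is a two-sentence version of exactly this argument; your additional remarks on Schubert-equivalence and the locality of the R2 reductions are reasonable elaborations but do not change the approach.
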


\begin{proof} We note that each element of the sequence $a_1\, a_2\, ...\, a_n$ represents the shadow of a tangle sub-diagram that is a $2$-braid. The total number of ways to get the sequence $(a_1-2k_1)\, (a_2-2k_2)\, ...\, (a_n-2k_n)$, then, is the product of the number of ways to get each element in the sequence, as determined in Lemma~\ref{braid}.
\end{proof}

\subsection{Conway Notation for Pseudoknots}
Thus far, we have only considered rational knot shadows and rational knot diagrams. If we are to consider more generally diagrams of rational pseudoknots, or Conway notation for all pseudoknots, we need to expand our classical notational conventions.

\begin{figure}[th]
\centerline{\includegraphics[width=3in]{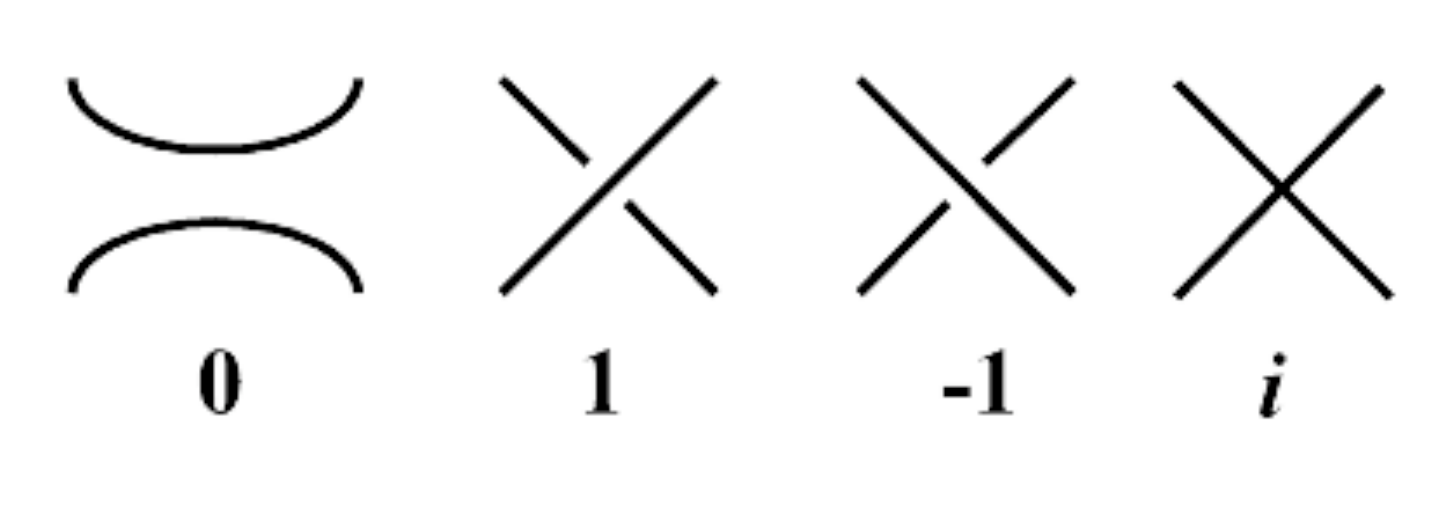}} \vspace*{8pt}
\caption{The elementary tangles. \label{f1}}
\end{figure}

The Conway symbols of knots with at most $10$ crossings and
links with at most $9$ crossings are given in the Appendix of Rolfsen's book \cite{2}. Expanding the Conway notation for classical knots and links \cite{conway,2,caudron,4},
we define Conway notation for pseudoknots and pseudolinks by adding to the list of the elementary tangles $0$, $1$, $-1$ the elementary tangle $i$, which denotes a precrossing. See Figure~\ref{f1}. In our extended Conway notation for pseudoknots and pseudolinks, we have the following conventions:
\begin{itemize}
  \item A sequence of $n$ precrossings (that is, a precrossing $n$-twist) is denoted by $i,\ldots ,i$, or simply $i^n$.
  \item A sequence of $n$ classical crossings (that is, a positive $n$-twist) is denoted by $1,\ldots ,1$, or $1^n$.
  \item A sequence of $n$ classical negative crossings (negative $n$-twist) is denoted by $-1,\ldots ,-1$ , or $(-1)^n.$
\end{itemize}

In this way, we also obtain mixed pseudotwists, e.g., a pseudotwist $(i,i,1,1,1,-1,-1)$, which can be shortly written as $(i^2,1^3,(-1)^2)$. A pseudotwist will be called {\em reduced} if it has the minimal number of crossings among all pseudotwists equivalent to it. Every reduced pseudotwist will be positive or negative, i.e., it will not contain crossings of different signs. Since crossings in a pseudotwist commute, every reduced twist can be ordered and represented in the the form $(i^p,1^q)$ or $i^p,(-1)^q$  where $p,q\ge 0$. Minimal diagrams of pseudoknots will contain only ordered reduced twists.

\begin{figure}[th]
\centerline{\includegraphics[width=3in]{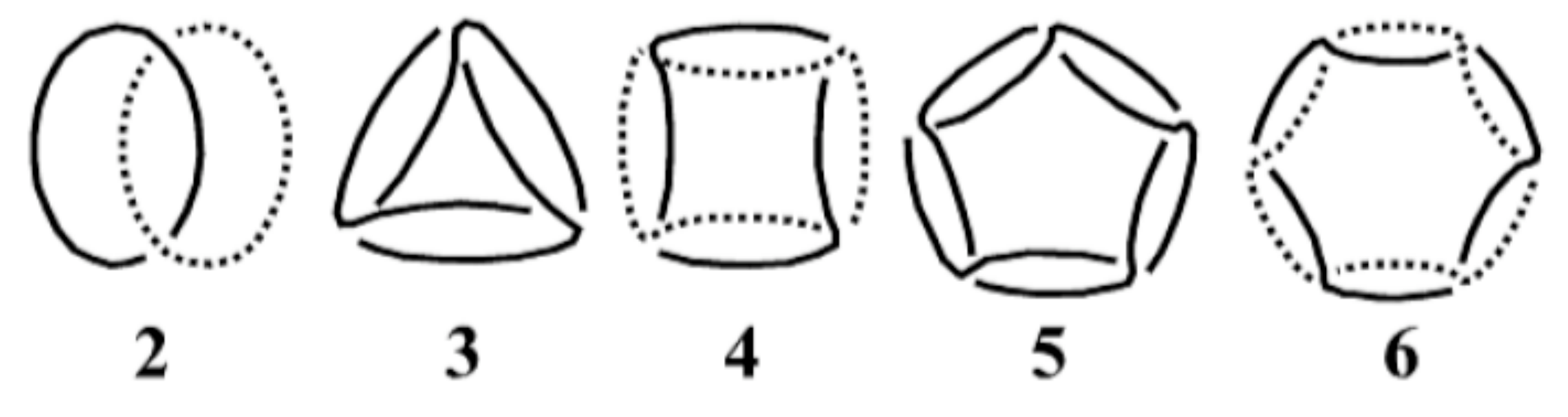}} \vspace*{8pt}
\caption{Hopf link $2_1^2=2$ and its family $p$ ($p=2,3,\ldots $).
\label{f9}}
\end{figure}

For an example which illustrates how the notation is used, we consider the simplest family $2_1^2$, $3_1$, $4_1^2$, $5_1$, $\ldots$ of torus knots and links which includes the Hopf link $2$, the trefoil $3$, etc. This family can be denoted in Conway notation by the general symbol $p$ ($p\ge 2$). By substituting a precrossing for a classical crossing in each family member, we obtain the family of pseudoknots and pseudolinks $(i,1)$, $(i,1,1)=(i,1^2)$, $(i,1,1,1)=(i,1^3)$, $(i,1,1,1,1)=(i,1^4)$, $\ldots$, and in general $(i,1^{p-1})$ ($p\ge 2$).

\section{Crossing Number of a Pseudoknot}\label{crossnum}

Just as with classical knots, we have a natural notion of crossing number for pseudoknots, as follows.

\begin{definition} The {\em crossing number}, $cr(K)$, of a pseudoknot $K$ is the minimum number of total crossings (both classical and precrossings) of any projection of that pseudoknot.\end{definition}

Given this definition, one natural question that arises is the following. Is the crossing number of a pseudoknot equal to the maximum crossing number of its resolutions? Equivalently, if it is possible to reduce any resolution of a pseudoknot, is it necessarily possible to reduce the pseudoknot?\\

Using the WeRe-set invariant, we are able to answer this question negatively. Indeed, there are pseudoknots with crossing number strictly greater than the maximum of the crossing numbers of knots in the  resolution set. Consider the pseudodiagram given in Conway notation by $(3)\,(i)\,(-2)$. The pseudoknot $K$ given by this diagram has WeRe-set $\{(5_1,\frac{1}{2}),(5_2,\frac{1}{2})\}$, which is different from the WeRe-set of any pseudoknot with crossing number five. We can conclude that the crossing number of $K$ is six, while the maximum of the crossing numbers of knots in the WeRe-set is five.

On the other hand, there is a nice class of examples for which our question can be answered positively. First, some terminology is needed.

\begin{definition} A crossing in a pseudodiagram is called {\em nugatory} if there exists a circle in the plane enclosing part of the knot which intersects the knot at the crossing and nowhere else. \end{definition}

\begin{figure}[th!]
\centerline{\includegraphics[width=3in]{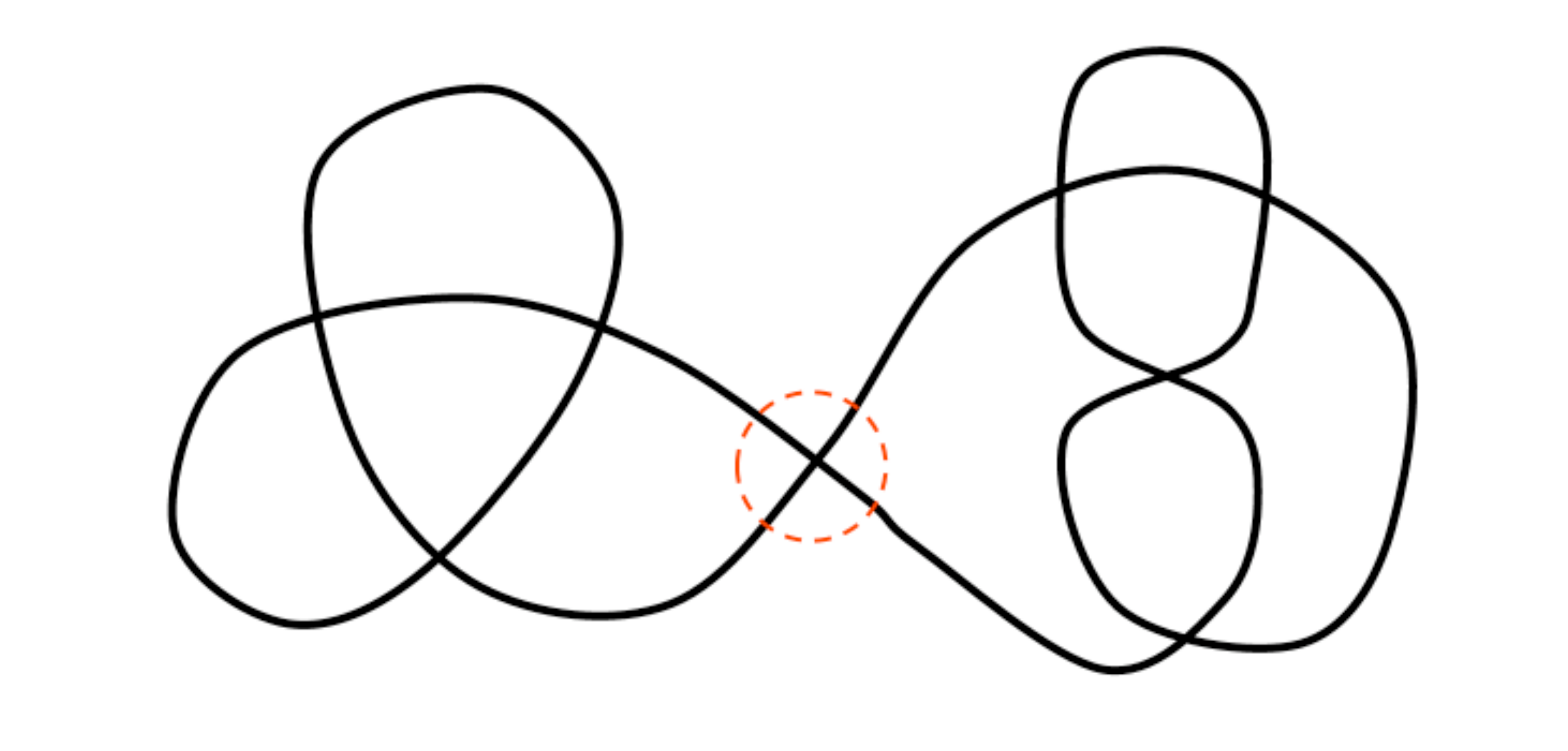}} \vspace*{8pt}
\caption{A nugatory crossing.
\label{nug}}
\end{figure}

\begin{definition} A pseudodiagram is {\em potentially alternating} if it is has no nugatory crossings and if it is possible to resolve its precrossings so that the resulting knot diagram is alternating.\end{definition}

We note that any shadow is potentially alternating, while our example above, $(3)\,(i)\,(-2)$, is not potentially alternating. Returning to our question, we find the following result.

\begin{proposition}
Suppose a pseudoknot $K$ has a potentially alternating diagram $D_K$. Then the crossing number of $K$ is realized in $D_K$. Furthermore, the crossing number of $K$ is equal to the maximum crossing number of the knots in the resolution set of $K$.
\end{proposition}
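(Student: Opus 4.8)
The plan is to reduce everything to the classical theorem of Kauffman, Murasugi, and Thistlethwaite that a reduced alternating diagram of a knot realizes its crossing number. Write $n$ for the total number of crossings (classical plus precrossings) appearing in $D_K$. Since $D_K$ is potentially alternating, we may resolve its precrossings to obtain a knot diagram $A$ that is alternating, and $A$ again has exactly $n$ crossings, because resolving a precrossing does not alter the underlying projection. The first key observation is that being nugatory is a property of the shadow alone---it does not depend on over/under information---so the crossings of $A$ that are nugatory are precisely the crossings of $D_K$ that are nugatory, of which there are none. Hence $A$ is a reduced alternating diagram, and by the Kauffman--Murasugi--Thistlethwaite theorem the knot $K_A$ represented by $A$ satisfies $cr(K_A)=n$.

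Next I would bound the crossing numbers of all resolutions. Every resolution $K'$ of $K$ arises by assigning crossing information to the precrossings of $D_K$, which yields a knot diagram with $n$ crossings; therefore $cr(K')\le n$ for every $K'$ in the resolution set of $K$. Combined with $cr(K_A)=n$ and the fact that $K_A$ is itself a resolution of $K$, this shows that the maximum crossing number among the knots in the resolution set of $K$ is exactly $n$.

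It then remains to show $cr(K)=n$ and that this value is realized in $D_K$. Since $D_K$ has $n$ crossings, $cr(K)\le n$. For the reverse inequality, suppose toward a contradiction that $K$ has a pseudodiagram $D'$ with $m<n$ crossings. Resolving all precrossings of $D'$ produces knot diagrams each with $m$ crossings, so every knot obtainable from $D'$ by resolution has crossing number at most $m$. But by the Theorem asserting the WeRe-set (and hence the underlying resolution set) is a pseudoknot invariant, the set of knots obtainable from $D'$ coincides with the resolution set of $K$, which contains $K_A$ with $cr(K_A)=n>m$---a contradiction. Hence $cr(K)=n$, realized in $D_K$, and this common value equals the maximum crossing number of the knots in the resolution set.

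Essentially all of the technical weight is carried by the classical Tait-type theorem on alternating diagrams, which I would invoke as a black box; the only genuinely new points to verify are (i) that ``nugatory'' transfers verbatim between a pseudodiagram and any of its resolutions, so that a potentially alternating diagram resolves to a \emph{reduced} alternating diagram, and (ii) that passing to a hypothetical smaller pseudodiagram cannot shrink the resolution set. I expect (i) to be the step that needs the most care: one must be sure that resolving a single precrossing cannot inadvertently render some other crossing nugatory, nor destroy a nugatory obstruction---both of which are ruled out precisely because nugatoriness is defined on the projection and is insensitive to crossing information.
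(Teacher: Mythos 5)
Your proposal is correct and follows the same route as the paper: the paper's (very terse) proof likewise rests on the observation that a potentially alternating diagram resolves to a reduced alternating knot $K_a$ with $cr(K_a)=cr(D_K)$, which by the Tait/Kauffman--Murasugi--Thistlethwaite theorem pins down the crossing number. You have simply filled in the details the paper leaves implicit (nugatoriness depending only on the shadow, and invariance of the resolution set ruling out a smaller pseudodiagram), and those details are right.
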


\begin{proof}
The result follows immediately from the fact that $K$ has a reduced alternating knot $K_a$ in its resolution set with $cr(K_a)=cr(D_K)$.
\end{proof}

\begin{remark} In our example $(3)\,(i)\,(-2)$, we saw that if crossing $i$ is resolved one way, the knot $5_1$ results. If the crossing is resolved the other way, we get the knot $5_2$. An interesting question to consider is whether there is a similar example of a pseudodiagram with a single non-nugatory precrossing such that both resolutions produce the same knot rather than two different knots (so the WeRe-set has a single element). In fact, the rational knot $(1,1,1) (i) (-1,-1,-1)$ is such an example for {\em unoriented} pseudoknots. Both resolutions of the precrossing yield the knot $6_1$.

The precrossing in a pseudodiagram with this curious property are called {\em cosmetic} or (non-nugatory) {\em degenerate} crossings. Nugatory crossings are also considered to be degenerate, but there are not yet any known oriented knots containing non-nugatory degenerate crossings (i.e. cosmetic crossings). This problem was first posed by X.-S. Lin and appears as Problem 1.58 in Rob Kirby's Problem List.\end{remark}

\section{Homotopy of Pseudoknots}\label{hom}

In this section, we consider more broadly the relationship between pseudoknots that is generated by crossing changes.

\begin{definition}
Two pseudoknots are called {\it homotopic}, or {\it crossing-change equivalent}, if their pseudodiagrams can be related by sequences of crossing changes and pseudo-Reidemeister moves.
\end{definition}

In the case of classical knots, the notion of homotopy is uninteresting. Indeed, every classical knot is homotopic to the unknot. The notion of homotopy for pseudoknots, however, is nontrivial. While there exist homotopically nontrivial pseudoknots, the following result gives a large class of pseudoknots which remain homotopic to the unknot.

\begin{theorem}
Every pseudoknot that can be represented by a pseudodiagram with a single precrossing is homotopic to the unknot. 
\end{theorem}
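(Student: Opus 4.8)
The plan is to take a pseudodiagram $D$ with a single precrossing $c$ and show, using only crossing changes and pseudo-Reidemeister moves, that it can be reduced to the trivial diagram. The key observation is that a pseudodiagram with exactly one precrossing is, away from $c$, an ordinary knot diagram; and since homotopy allows arbitrary crossing changes at \emph{classical} crossings, the classical part of $D$ can be manipulated as freely as any classical knot. First I would fix a resolution of $c$ — say the positive one — to obtain a genuine classical knot diagram $D_+$. Every classical knot is homotopically trivial, so there is a finite sequence of crossing changes and classical Reidemeister moves taking $D_+$ to the standard unknot diagram. The goal is to lift this sequence back to the pseudodiagram setting, carrying the precrossing $c$ along.

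The main technical point is tracking the precrossing $c$ through this sequence. I would argue that, possibly after first using PR1 and PR2 moves to slide $c$ into a small disk $\Delta$ that meets the rest of the diagram in two arcs (so that locally $c$ looks like a single precrossing on a trivial-looking clasp or kink), we may assume $c$ sits in a region where none of the Reidemeister moves in the chosen trivializing sequence take place. Concretely: isotope $D$ so that $c$ lies inside a tiny disk near a chosen basepoint on the strand; the strand passes through this disk and crosses over itself once at $c$. Because we are free to do crossing changes at all classical crossings, we can first ``unclasp'' everything around $c$, reducing to the situation where $c$ is a nugatory-type precrossing — a precrossing forming a small loop that meets the rest of the diagram only along a short arc. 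At that stage the precrossing is removable: one of its two resolutions is an R1 kink, and the other differs from that R1 kink by a crossing change, so a single crossing change followed by a classical R1 move eliminates $c$ entirely, leaving a classical diagram, which is then homotopic to the unknot by the classical fact.

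I expect the main obstacle to be making the ``slide $c$ into a harmless position'' step rigorous without hand-waving — i.e., showing that using crossing changes on the classical crossings one can always arrange that the precrossing becomes nugatory (or PR1-removable). One clean way to handle this: pick a strand orientation and a basepoint just before $c$; walk along the knot and, each time we would pass under a previously-traversed strand, perform a crossing change so that we always pass over. This standard ``unknotting by descending diagram'' procedure uses only crossing changes at classical crossings (it never needs to touch $c$, since $c$ is already an over-pass in $D_+$), and it produces a descending diagram, which is the unknot and in fact can be reduced to the trivial diagram by R1 and R2 moves alone. Throughout this reduction the precrossing $c$ simply rides along as part of the ``already over'' data and is removed at the end by its PR1 (or by a final crossing change making it an honest kink and then R1). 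Hence $D$ is homotopic to the unknot.

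The write-up would therefore be: (1) resolve $c$ positively to get $D_+$; (2) apply the descending-diagram algorithm to $D_+$ using only crossing changes, noting $c$ is untouched; (3) observe the resulting descending diagram is reducible to the unknot by classical Reidemeister moves, and that $c$ can be carried along as a nugatory precrossing and removed via PR1; (4) conclude $D$ is homotopic to the unknot. The only place needing care is step (2)–(3), verifying that the crossing-change algorithm respects the single precrossing, which it does precisely because $c$ is a self-crossing of the strand that we may always choose to be the over-strand in the descending order.
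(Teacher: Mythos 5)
Your proposal follows essentially the same route as the paper's proof: base the diagram at the precrossing, run the descending-diagram algorithm using crossing changes at classical crossings only, arrange that the precrossing becomes nugatory, and remove it with PR1. Two points deserve care. First, your suggestion that $c$ could be eliminated by ``a final crossing change making it an honest kink and then R1'' is not a legal operation: a precrossing carries no over/under information to change, and resolving it into a classical crossing is not among the generating moves of homotopy; only PR1 can change the number of precrossings, so that must be the removal mechanism (as your main clause correctly states). Second, the assertion that ``the descending diagram is reducible to the unknot by classical Reidemeister moves and $c$ can be carried along as a nugatory precrossing'' is the crux, and it does not follow from the descending property alone --- a trivializing Reidemeister sequence for a descending diagram could in principle use an R2 cancellation at $c$, which would not lift to any pseudo-Reidemeister move. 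What makes the argument work is the observation (which your basepoint choice already sets up, and which the paper makes explicit) that the precrossing splits the knot into two subloops, and descending order from a basepoint adjacent to $c$ forces the first subloop to lie entirely over the second at every classical crossing between them; this is what lets you pull the two subloops apart by R2/R3 moves that never cancel $c$, each subloop being separately descending and hence reducible to a simple loop, after which $c$ is a genuine PR1 kink. With that step supplied, your argument coincides with the paper's.
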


\begin{proof} First, recall that a knot diagram is {\it based} if a base point (different from the crossing points) is specified on the diagram, and {\it oriented} if an orientation is assigned to it. Let $K$ be a knot and $\widetilde{K}$ be a based oriented diagram of $K$. The {\it descending diagram} of $\widetilde{K}$, denoted by $d(\widetilde{K})$, is obtained as follows: beginning at the basepoint of $\widetilde{K}$ and proceeding in the direction specified by the orientation, change the crossings as necessary so that each crossing is first encountered as an over-crossing. Note that $d(\widetilde{K})$ is the diagram of a trivial knot. Every knot diagram can be turned by a finite number of crossing changes into a descending diagram, so the unknotting number of every knot is finite.

We apply a similar idea to pseudoknots that can be represented by a pseudodiagram with a single precrossing. The precrossing divides the pseudoknot in two subcomponents. Choose the basepoint such that between it and the precrossing there are no crossings. In the first step, beginning at the basepoint and proceeding in the direction specified by the orientation, change the crossings as necessary so that the first subcomponent is always over the second. Then, pseudo-Reidemeister moves may be performed so that there are no crossings involving both subcomponents. In effect, the precrossing becomes nugatory. In the next step, beginning at the basepoint and proceeding in the direction specified by the orientation, change the crossings as necessary to make each of the subcomponents descending. The obtained diagram is a pseudodiagram of the unknot with a single precrossing. Hence, every pseudoknot with a single precrossing is homotopic to the unknot, so the unknotting number of pseudoknots with a single precrossing is a finite invariant. 
\end{proof}

\begin{remark} We note that every pseudoknot with $k=2$ precrossings is homotopic to the unknot or to the pseudoknot $3_1.2=(i^2,1)$. Moreover, every pseudoknot with $k=3$ precrossings is homotopic to the unknot, the pseudoknot $3_1.2=(i^2,1)$, or one of the pseudoknots $3_1.1=(i^3)$ or $4_1.2=(i^2)\,(i,1)$. Every pseudoknot with $k=4$ precrossings is crossing-change equivalent to one of the preceding pseudoknots, or to one of the following pseudoknots:  $(i^2)\,(i^2)$, $(i^4,1)$, $(i^3)\,(i,1)$, $(i^2,1)\,(i)\,(i,1)$, $(i,1)\,(i)\,(i)\,(i,1)$, $(i^2,1)\#(i^2,1)$, {\it etc}. \end{remark}

Of course, these results are only interesting if there exist homotopically nontrivial pseudoknots. For instance, how do we know that $3_1.2$ is homotopically nontrivial? This fact is a consequence of the following theorem, which uses the notion of a Gauss diagram corresponding to a pseudodiagram. For more on this extension of the theory of Gauss diagrams, see~\cite{SMALL}.

\begin{theorem}
Let $K$ be a pseudodiagram with (at least) two precrossings whose corresponding chords intersect in the Gauss diagram for $K$. Then any pseudodiagram $K'$ that is homotopic to $K$ must be nontrivial. In particular, the Gauss diagram for $K'$ must contain intersecting chords corresponding to two precrossings. 
\end{theorem}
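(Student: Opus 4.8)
\quad
The plan is to read off, from the Gauss diagram of a pseudodiagram, the chord diagram formed by the precrossing chords \emph{alone} --- discarding every classical chord together with all over/under, sign, and arrow information --- and to track the \emph{interlacement graph} $\Gamma$ of this ``precrossing shadow'': its vertices are the precrossing chords, and its edges are the pairs of precrossing chords that are interlaced on the Gauss circle. The hypothesis says exactly that $\Gamma$ of the given diagram of $K$ has an edge, while the round crossingless diagram of the unknot has $\Gamma$ empty. Hence the theorem will follow at once from the claim that, up to adding or deleting isolated vertices, $\Gamma$ is a homotopy invariant of pseudoknots; in particular ``$\Gamma$ has an edge'' is preserved under homotopy.

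To prove that claim I would go move by move. A crossing change only toggles the over/under datum at a classical crossing, so it has no effect whatsoever on the precrossing shadow. Every Reidemeister-type move, classical or pseudo, is supported in a disk $\Delta$ whose intersection with the knot carries only the (at most three) crossings directly involved in the move; for the classical moves R1--R3 none of these is a precrossing, and for PR1, PR2, and the two PR3 moves exactly one of them is. Consequently a single move can do only three things to the Gauss diagram as far as precrossings are concerned: insert or delete classical chords (irrelevant to $\Gamma$); in the PR1 case, insert or delete one precrossing chord whose two endpoints are consecutive on the Gauss circle, hence an isolated vertex of $\Gamma$; or rearrange, inside the sub-arcs of the Gauss circle cut out by $\Delta$, the endpoints of the crossings involved in the move. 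In this last situation at most one precrossing has any endpoint in those sub-arcs, so the cyclic order along the circle of the endpoints of any two distinct precrossing chords that persist through the move is unchanged --- for two uninvolved precrossings because none of their endpoints moves, and for the one involved precrossing against an uninvolved one because its endpoints never leave (nor, in the PR3 case, swap between) the sub-arcs of $\Delta$ they started in. Therefore no edge of $\Gamma$ among persisting vertices is ever created or destroyed; only interlacements of the involved precrossing chord with \emph{classical} chords can change, and $\Gamma$ does not record those. So each move alters $\Gamma$ only by isolated vertices, as claimed.

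Granting this invariance, the conclusion is immediate: along any homotopy from $K$ to $K'$ the graph $\Gamma$ retains an edge, so the Gauss diagram of $K'$ contains a pair of interlaced precrossing chords --- this is the ``in particular'' assertion, and it forces $K'$ to carry at least two precrossings --- and $K'$ cannot be trivial, because a pseudoknot PR-equivalent to the crossingless round diagram would have $\Gamma$ empty. I expect the one genuinely delicate point to be the bookkeeping behind the second paragraph in the PR2 and PR3 cases: pinning down, from the precise pictures of those moves, that the supporting disk $\Delta$ meets the knot along sub-arcs carrying the endpoints of only the single involved precrossing, so that the local rearrangement $\Delta$ performs can never drag one precrossing endpoint past another along the Gauss circle. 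A lesser point to note in passing is that neither crossing changes nor any Reidemeister-type move converts a classical crossing into a precrossing or conversely --- homotopy, as defined here, never resolves a precrossing --- so the precrossing shadow, and with it $\Gamma$, is well defined at every stage of the homotopy.
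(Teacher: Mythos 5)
Your invariance argument is the heart of the theorem and you handle it correctly --- indeed in more detail than the paper, which simply asserts that ``no Gauss diagram version of a PR or crossing change move can have the effect of uncrossing these chords.'' Your move-by-move analysis of the interlacement graph $\Gamma$ of prechords (classical moves and crossing changes do not touch prechord endpoints; PR1 adds or deletes an isolated vertex; PR2 and the PR3 moves involve only one prechord, whose interlacement with every other prechord is preserved because no other prechord has an endpoint in the supporting disk) is exactly the right way to make that assertion precise, and it delivers the ``in particular'' clause of the theorem.

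The gap is in what you take ``nontrivial'' to mean. You read it as ``not PR-equivalent to the crossingless round diagram,'' which does follow from your invariance of $\Gamma$. But the paper derives nontriviality from Lemma 3.6 of the cited Henrich--MacNaughton--Narayan--Pechenik--Townsend paper, which states that any pseudodiagram whose Gauss diagram contains two interlaced prechords has a \emph{nontrivial resolution}: some choice of crossing information at the precrossings yields a knotted diagram. (This is the standard sense of ``nontrivial'' for pseudodiagrams going back to Hanaki.) That is strictly stronger than your conclusion --- a pseudodiagram could a priori fail to be equivalent to the trivial diagram while still having only unknotted resolutions --- and it is genuine mathematical content, not a formal consequence of the combinatorics of $\Gamma$: one must argue that the interlacement forces enough additional structure in a planar diagram to produce a knot. (Note that two mutually interlaced chords and nothing else do not even satisfy the evenness condition for planar realizability of Gauss diagrams, so the classical crossings necessarily enter the argument.) To prove the theorem as the authors intend it, you would need to cite or reprove that lemma; as written, your argument establishes only the weaker statement, which happens to suffice for the paper's immediate application (showing $3_1.2$ is not homotopic to the unknot) but not for the theorem itself.
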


\begin{proof} This result follows from Lemma 3.6 in~\cite{SMALL} which states that any pseudodiagram with a Gauss diagram of this form must have a nontrivial resolution. Moreover, we consider the effect that PR moves and the crossing change move have on the Gauss diagram of a pseudodiagram. If a Gauss diagram has two intersecting {\em prechords} (i.e. chords corresponding to precrossings), then any equivalent Gauss diagram has two intersecting prechords, as no Gauss diagram version of a PR or crossing change move can have the effect of uncrossing these chords.
\end{proof}

\begin{remark} For those unfamiliar with the theory of Gauss diagrams, we note that the requirement that two prechords cross in the Gauss diagram can be reformulated as follows. Given a pseudodiagram with precrossings $a$ and $b$, if one can travel along one half of the knot from $a$ back to itself, encountering $b$ exactly once along the way, then the prechords for $a$ and $b$ intersect in the Gauss diagram.\end{remark}

In every homotopy class, a pseudoknot realizing the minimum crossing number will be used as the representative of this class and denoted by $K_0$. In the cases where an appropriate $K_0$ can be determined, the following notion is a useful analog of the unknotting number.

\begin{definition}
The $K_0$ crossing-change number $u_{c}$ of the pseudoknot $K$ belonging to the homotopy class of $K_0$ is the minimum number of crossing changes taken over all pseudodiagrams of $K$ needed to obtain the pseudoknot $K_0$.
\end{definition}

In the case of pseudoknots that are homotopic to the unknot, the $K_0$ crossing-change number will be simply called {\it pseudounknotting number}. Just as with the unknotting number for classical knots, pseudounknotting number cannot be computed from minimal diagrams. To show this, we can use the pseudoknot equivalent of the Nakanishi-Bleiler example: the fixed diagram of  $(i,1^4)\,(1)\,(1^4)$ requires at least three crossing changes to unknot, and its unknotting number is at most 2. This value can be obtained if in $(i,1^4)\,(1)\,(1^4)$, we make one crossing change and obtain $(i,1^4)\,(-1)\,(1^4)=(i,1^2)\,(1)\,(1^2)$. Then by the crossing change $(i,1^2)\,(-1)\,(1^2)$, we obtain the unknot.

We can formulate the pseudoknot equivalent of the Bernhard-Jablan Conjecture \cite{4} as follows. Starting from a minimal diagram of $K$ with $n$ classical crossings, we make all single crossing changes to obtain $n$ new pseudodiagrams. We then reduce the pseudodiagrams obtained to their minimal diagrams using pseudo-Reidemeister moves. Then, we continue with this recursive process until $K_0$ is obtained. The conjecture states that the $K_0$ crossing-change number $u_{c}$ of $K$ will be equal to the number of steps in this recursive crossing-change process, which is denoted $u_{BJ}$.

We consider a question whose solution relies on this conjecture. If $K_1$ and $K_2$ are the resolutions of a pseudoknot $K$ with a single precrossing, it is clear that $u_c(K)\ge max(u(K_1),u(K_2))$, where $u$ denotes the classical unknotting number. Is there a pseudoknot of this form that realizes the strong inequality $u_c(K)> max(u(K_1),u(K_2))$? How large can the difference $u_{BJ}(K)-max(u(K_1),u(K_2))$ be?

Because it is difficult to find exact unknotting numbers for pseudoknots, we restrict our consideration to the unknotting numbers $u_{BJ}$. We find that the difference $u_{BJ}(K)-max(u(K_1),u(K_2))$ can be arbitrarily large. An example illustrating this phenomenon is the family of pseudoknots $(1^p)\,(1^2)\,(1)\,(i,1),$ ($p\ge 2$). For this family, the $BJ$-unknotting number is $\lfloor {{p+1}\over 2}\rfloor +1$ while the unknotting numbers of both resolutions are 1.

\section{Mirror Images and Chirality}\label{chiral}
\subsection{Signed WeRe-Sets}

In the previous sections, we used the WeRe-set as our primary invariant for distinguishing pseudoknots. In fact, there is a stronger version of the WeRe-set that remains invariant. The {\em signed weighted resolution set} of a pseudoknot is a WeRe-set where distinctions are made between resolutions and their mirror images. While Theorem~\ref{2ptorus} is in fact a result about signed WeRe-sets, we have not often made the distinction between a knots and their mirror images. The following example illustrates how this distinction can be useful.

Consider the standard trefoil shadow, $(i^3)$. The WeRe-set of this pseudoknot is $\{(0_1,{6\over 2^3}),(3_1,{2\over 2^3})\}$ (where $3_1$ may denote both the right- and the left-handed trefoils). On the other hand, the trefoil with two precrossings and a positive crossing, $(i^2,1)$, has the same WeRe-set, despite the fact that the  left-handed trefoil, $-3_1$, is not a possible resolution of $(i^2,1)$ but is a possible resolution of $(i^3)$. Indeed, the signed WeRe-set of $(i^2,1)$ is $\{(0_1,{6\over 2^3}),(+3_1,{2\over 2^3})\}$, where $+3_1$ emphasizes that the resolution is the right-handed trefoil. Of course, $-3_1$ is in the (signed) WeRe set, $\{(0_1,{6\over 2^3}),(3_1,{1\over 2^3}),(-3_1,{1\over 2^3})\}$, of $(i^3)$. So we see that the signed WeRe-set can distinguish these two pseudoknots.

There are other interesting examples of pseudoknots with the same WeRe-sets but different signed WeRe-sets. For instance, the pseudoknots $(1,1,1)\,(i)\,(-1,-1,-1)$ and $3\,2$ have the same WeRe-set $\{(6_1,1)\}$. However, their signed WeRe-sets are $\{(6_1,{1\over 2}),(-6_1,{1\over 2})\}$ and $\{(6_1,1)\}$ respectively.

\subsection{Pseudoknot Chirality}

When we begin to make distinctions between knots and their mirror images, it is natural to ask questions regarding which knots are equivalent to their mirror images, i.e. which knots are {\em amphicheiral}. The mirror image $\overline{D}$ of a pseudodiagram $D$ is the diagram obtained from $D$ by making all positive crossings negative and vice versa, leaving all precrossings untouched. Two pseudoknots are mirror images of one another if thy have pseudodiagrams that are mirror images. Given this notion of mirror image for pseudoknots, we introduce the related notion of amphicheirality for pseudoknots.

\begin{definition}
A pseudoknot $K$ is called {\it amphicheiral} if $K$ and its mirror image $\overline{K}$ are ambient isotopic.
\end{definition}

In order to recognize amphicheiral pseudoknots we can use signed WeRe-sets, where to an amphicheiral pseudoknot $K$ and its mirror image $\overline{K}$ correspond the same signed WeRe-sets.

In order to construct an infinite number of amphicheiral rational pseudoknots with one or two precrossings, we can use the following two theorems for classical knots:

\begin{theorem}[Caudron~\cite{caudron}, Siebenmann~\cite{5}]
A rational knot is amphicheiral {\it iff} its Conway symbol is
mirror-symmetric and has an even number of crossings.
\end{theorem}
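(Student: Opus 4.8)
\quad The plan is to reduce the statement to Schubert's theorem together with the elementary arithmetic of the continued fraction attached to a rational tangle. Write the knot as $K(p/q)$ with $p$ odd, $\gcd(p,q)=1$ and $0<q<p$. Its mirror image is $K(p/(-q))=K(p/(p-q))$, so by Schubert's theorem $K(p/q)$ is amphicheiral precisely when $2q\equiv 0\pmod p$ or $q(p-q)\equiv 1\pmod p$. Since $p$ is odd and coprime to $q$, the first alternative forces $p=1$ (the unknot, for which the assertion is vacuous), so for a nontrivial rational knot amphicheirality is equivalent to the single congruence $q^{2}\equiv -1\pmod p$. Everything then comes down to translating this congruence into a condition on the Conway symbol.

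First I would set up the dictionary between a Conway symbol and its reversal. Encoding the rational tangle of the sequence $(a_{1},\ldots,a_{n})$ as a product of the elementary matrices $\left(\begin{smallmatrix}a_i&1\\1&0\end{smallmatrix}\right)$ and transposing the product, one sees that the reversed sequence $(a_{n},\ldots,a_{1})$ has the same numerator $p$ (mirror symmetry of continuants) and a denominator $r$ with $qr\equiv(-1)^{n+1}\pmod p$, the sign being $\det\left(\begin{smallmatrix}a&1\\1&0\end{smallmatrix}\right)=-1$ raised to the $n$th power. When the Conway symbol is mirror-symmetric the reversed sequence is the same fraction, so $r=q$ and hence $q^{2}\equiv(-1)^{n+1}\pmod p$. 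A short parity computation with continuants then settles the crossing number: for a reduced alternating symbol (all $a_{i}\geq 1$) the crossing number is $\sum_{i}a_{i}$ by Tait's minimality theorem for alternating diagrams, and for a palindrome this sum is even when $n$ is even, while for $n$ odd it has the parity of the central entry $a_{(n+1)/2}$ --- but an even central entry would, on reducing the $a_{i}$ modulo $2$ and repeatedly applying the identity $[\ldots,x,0,y,\ldots]=[\ldots,x+y,\ldots]$ through the centre, force the numerator $p$ to be even, which is impossible for a knot. Thus, for a knot, a mirror-symmetric Conway symbol has an even number of crossings if and only if $n$ is even; combined with the displayed congruence this is exactly $q^{2}\equiv -1\pmod p$, i.e.\ amphicheirality. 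This gives the ``if'' direction.

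For the ``only if'' direction one must go backwards: starting from $q^{2}\equiv -1\pmod p$, produce the minimal Conway symbol of $K(p/q)$ and show it is a palindrome of even length. I expect this to be the main obstacle. The clean route is the classical number-theoretic fact --- provable by running the Euclidean algorithm on $(p,q)$ and exploiting the symmetry $q\cdot q\equiv -1$ --- that $q^{2}\equiv -1\pmod p$ holds if and only if the continued fraction expansion of $p/q$ with all partial quotients positive is a palindrome of even length; one then checks that this expansion is the reduced alternating one, so that its digit sum really is the (necessarily even) crossing number. The remaining points are routine but need care: fixing a normalization for the Conway symbol (the ambiguity $[\ldots,a,1]=[\ldots,a+1]$ and the convention $a_{n}\geq 2$) so that ``the Conway symbol'' is well defined up to reversal, and verifying that the positive continued fraction indeed yields a reduced alternating --- hence, by Tait, minimal --- diagram of $K(p/q)$; both are standard in the rational tangle calculus of \cite{conway} and \cite{adams}.
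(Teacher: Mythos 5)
The paper offers no proof of this statement: it is imported verbatim from Caudron and Siebenmann and used as a black box to build amphicheiral pseudoknot families, so there is no in-paper argument to compare yours against. Judged on its own terms, your reduction is the standard classical route and the forward implication is essentially complete. Schubert's theorem correctly turns amphicheirality of $K(p/q)$ into the congruence $q^2\equiv -1 \pmod p$ (your disposal of the degenerate alternative $2q\equiv 0\pmod p$ is right, since $p$ is odd and coprime to $q$), the transpose-of-continuant-matrices computation showing that a palindromic symbol of length $n$ forces $q^2\equiv(-1)^{n+1}\pmod p$ is correct, and the mod-$2$ collapsing argument via $[\ldots,x,0,y,\ldots]=[\ldots,x+y,\ldots]$ ruling out an even central entry in an odd-length palindrome of a knot is a clean way to tie the parity of $n$ to the parity of the crossing sum.

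The converse, which you candidly flag, is where the real content sits, and as written it is an outline rather than a proof. Two items must be supplied. First, the assertion that $q^2\equiv -1\pmod p$ forces the all-positive continued fraction expansion of $p/q$ (in the correct parity class of lengths) to be a palindrome is Serret's palindrome theorem; it is true, and it follows quickly from the same machinery you already set up --- the reversal of an expansion of length $n$ computes $p/r$ with $0<r<p$ and $qr\equiv(-1)^{n+1}\pmod p$, so choosing the length parity with $(-1)^{n+1}=-1$ gives $r\equiv q$, hence $r=q$, and uniqueness of the fixed-parity positive expansion yields the palindrome --- but ``run the Euclidean algorithm and exploit the symmetry'' is not yet that argument. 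Second, the normalization issue is load-bearing, not merely cosmetic: each $p/q$ has exactly two all-positive expansions, of lengths differing by one, and you must check that the even-length palindromic one is the reduced alternating symbol (in particular that it does not begin and end with $1$, which would violate reducedness), so that Tait minimality applies and its digit sum really is the crossing number. Neither point is an error in direction, but until they are written out the ``only if'' half remains a sketch.
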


\begin{theorem}[Kanenobu-Murakami~\cite{6}]
Every rational unknotting number 1 knot can be expressed by
one of the following Conway symbols
$$c_0\,c_1\,\ldots \,c_{r-1}\,c_r\,1\,1\,(c_r-1)\,c_{r-1}\,\ldots
\,c_1$$
$$c_0\,c_1\,\ldots \,c_{r-1}\,(c_r-1)\,1\,1\,c_r\,c_{r-1}\,\ldots
\,c_1,$$ \noindent where $c_i\ge 0$ for $i=0,\ldots ,r$ and
$c_r\ge 2$.
\end{theorem}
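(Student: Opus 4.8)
The plan is to run the argument through the double branched cover. For the two-bridge knot $K=\mathfrak b(p,q)$ this cover is the lens space $\Sigma_2(K)=L(p,q)$ with $p=\det(K)$, and I would keep in mind throughout that, by Schubert's theorem, $\mathfrak b(p,q)$ is unchanged if $q$ is replaced by any of $q^{-1},-q,-q^{-1}$ modulo $p$. Assuming $u(K)=1$, the first move is the Montesinos trick: a single crossing change turning $K$ into the unknot lifts to $\Sigma_2$ and exhibits $L(p,q)$ as $\pm p/2$-Dehn surgery on a knot $\kappa\subset S^3$. Writing the half-integral slope $p/2$ as a continued fraction then shows $L(p,q)$ bounds a four-manifold whose intersection lattice is the rank-two form $\left(\begin{smallmatrix}(p+1)/2 & 1\\ 1 & 2\end{smallmatrix}\right)$ (the off-diagonal $1$ records the linking of $\kappa$ with a meridian; the determinant is $p$). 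Analysing this lattice --- reading off its discriminant form, and invoking Donaldson's diagonalization theorem once positive-definiteness has been arranged --- should force the linking form $\langle q/p\rangle$ of $L(p,q)$ to be equivalent to $\langle 2/p\rangle$ together with a compatibility constraint between $p$ and $q$; unwinding this, I expect to land on $q\equiv\pm 2s^2\pmod p$ with $p=2rs\pm1$ for suitable integers $r,s$ (equivalently, $\kappa$ is a torus knot $T_{r,s}$, the conclusion one reaches directly from the cyclic surgery theorem). Kanenobu and Murakami, writing before that theorem, instead pair the Montesinos linking-form obstruction with Murasugi's inequality $|\sigma(K)|\le 2$ and then check by hand which two-bridge knots of each determinant survive --- legitimate, since there are only finitely many for each $p$.

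Next I would translate the arithmetic constraint into Conway symbols. Expand $s$ as a finite continued fraction $s=[\,c_r;c_{r-1},\dots,c_1,c_0\,]$ with $c_i\ge 0$ and $c_r\ge 2$, and verify --- after picking the convenient representative among the Schubert-equivalent fractions for $p/q$ --- that the continued fraction for $p/q$ collapses into exactly one of the two palindromic shapes $c_0\,c_1\cdots c_r\,1\,1\,(c_r-1)\,c_{r-1}\cdots c_1$ or $c_0\,c_1\cdots c_{r-1}\,(c_r-1)\,1\,1\,c_r\,c_{r-1}\cdots c_1$. This step is pure bookkeeping: the two factors of $p=2rs\pm1$ together with $q=2s^2$ are exactly what produce the two mirrored runs flanking a central $1\,1$.

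For the converse (every symbol on the list does have unknotting number one) I would argue directly on the rational diagram: change one of the two central $1$-crossings to its opposite and watch the continued fraction telescope. A short induction on $r$ should give the identity $[\,c_0,c_1,\dots,c_r,-1,1,(c_r-1),c_{r-1},\dots,c_1\,]=-1/c_0$ --- the mirrored pairs cancel one layer at a time --- and $-1/c_0$, having numerator $\pm 1$, is the unknot.

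The hard part will be the completeness claim in the first step: showing that \emph{no} other pairs $(p,q)$ occur. Exhibiting the two families and checking they unknot in one crossing change is elementary continued-fraction algebra, and the Montesinos trick makes the necessary condition believable, but pinning it down precisely requires real input --- Donaldson's theorem (or the cyclic surgery theorem), or, as in the original, the delicate interplay of the signature bound with the linking-form obstruction plus an exhaustive case check. Everything past that point is just translation between fractions and Conway symbols, with Schubert's theorem soaking up the ambiguity in $q$.
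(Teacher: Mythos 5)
This is a quoted theorem: the paper offers no proof of it at all, but imports it from Kanenobu--Murakami~\cite{6} as a black box used (together with the Caudron--Siebenmann amphicheirality criterion) to generate the families $(1^p)\,(i)\,(1)\,(1^p)$ and $(1^p)\,(i)\,(i)\,(1^p)$. So there is no in-paper argument to compare yours against; the only fair benchmark is the original 1986 proof and its modern descendants, and your outline does track those accurately: the Montesinos trick converting a single unknotting crossing change into a half-integral surgery description of the double branched cover $L(p,q)$, the rank-two lattice $\bigl(\begin{smallmatrix}(p+1)/2 & 1\\ 1 & 2\end{smallmatrix}\bigr)$ of determinant $p$, the resulting arithmetic condition $p=2rs\pm1$, $q\equiv\pm 2s^2$, and the continued-fraction bookkeeping (with Schubert's theorem absorbing the ambiguity in $q$) that turns this into the two palindromic Conway symbols around a central $1\,1$. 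The sufficiency direction --- change a central crossing and telescope the continued fraction to numerator $\pm1$ --- is likewise the standard and correct verification.

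That said, as a proof your proposal has one genuine gap, which you yourself flag: the necessity/completeness step is only gestured at. Saying that Donaldson diagonalization, or the cyclic surgery theorem, or the original signature-plus-linking-form analysis ``should force'' $q\equiv\pm 2s^2 \pmod p$ is a plan, not an argument; each of those routes requires real work (e.g.\ arranging definiteness before invoking Donaldson, or the finite case check per determinant in the original). Note also a small anachronism in attributing the cyclic-surgery route as an available alternative for Kanenobu and Murakami: their paper (1986) predates Culler--Gordon--Luecke--Shalen. Since the theorem is cited rather than proved in this paper, the appropriate resolution here is simply to keep the citation; if you wanted a self-contained proof you would need to actually execute one of the three completeness arguments you name.
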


The knot family $p\,1\,1\,p$ ($p\ge 2$), which includes knots $6_3$, $8_9$, $10_{17}$,$\ldots $, satisfies the criteria of both theorems and consists of amphicheiral rational knots with unknotting number 1. If we substitute a crossing $1$ by a precrossing in any knot in this family, we obtain the family of pseudoknots $(1^p)\,(i)\,(1)\,(1^p)$ (pictured in Fig. \ref{f10}a). The resolutions of this pseudoknot are the unknot and knot $p\,1\,1\,p$, which are both amphicheiral. Thus, all pseudoknots $(1^p)\,(i)\,(1)\,(1^p)$ are amphicheiral.

From this same knot family, we derive the family of amphicheiral pseudoknots $(1^p)\,(i)\,(i)\,(1^p)$ with two precrossings. Indeed, we notice that its resolutions are the amphicheiral knots $p\,1\,1\,p$, the unknots $p\,(-1)\,1\,p$ and $p\,1\,(-1)\,p$, and the amphicheiral knot $p\,(-1)\,(-1)\,p=(p-1) 1 1 (p-1)$ (see Fig. \ref{f10}b).

\begin{figure}[th]
\centerline{\includegraphics[width=3.8in]{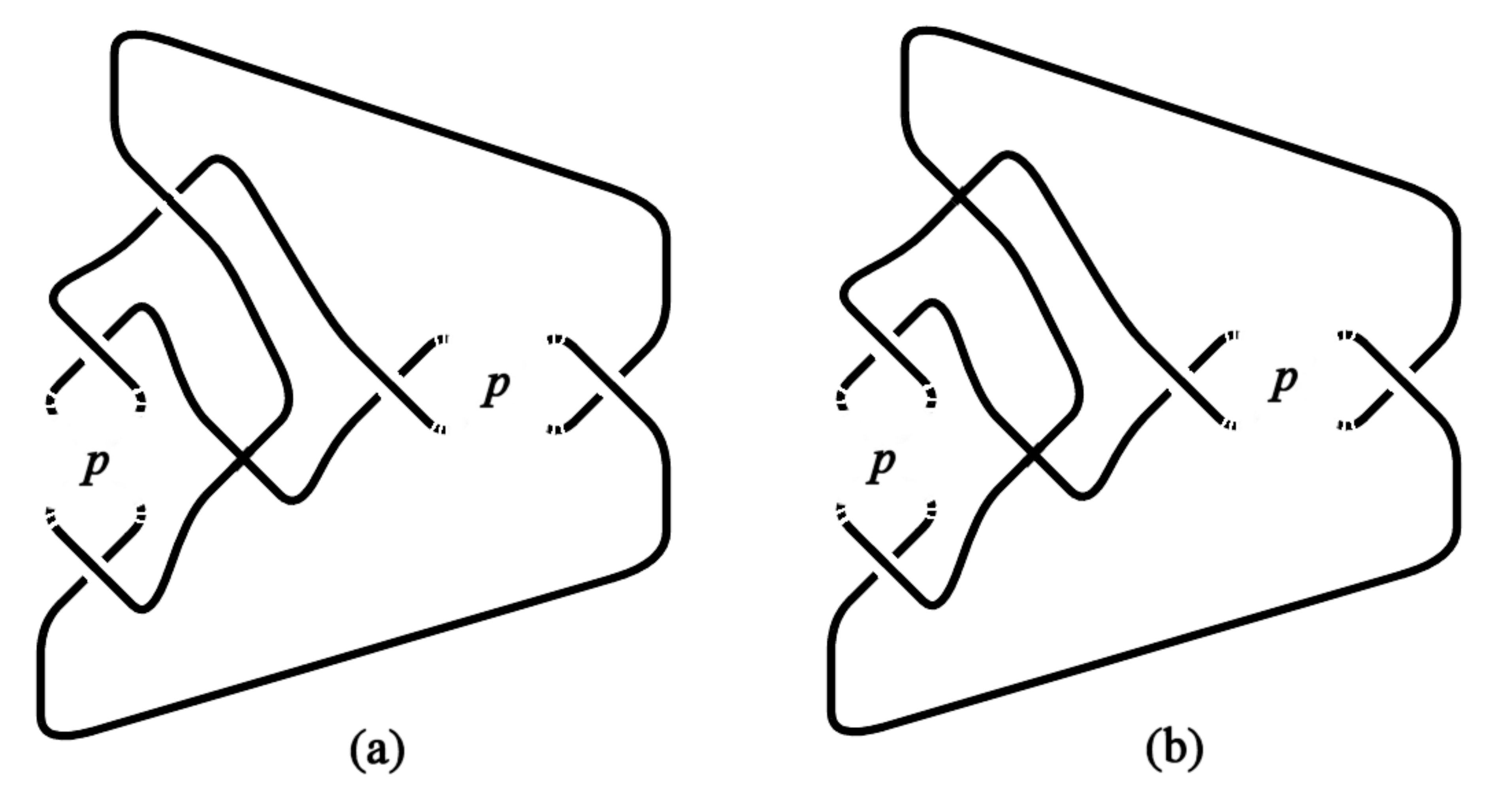}} \vspace*{8pt}
\caption{Amphicheiral pseudoknots (a) $(1^p)\,(i)\,(1)\,(1^p)$; (b) $(1^p)\,(i)\,(i)\,(1^p)$.
\label{f10}}
\end{figure}

\section{Additional Open Questions}\label{quest}

While we have alluded to several open questions throughout our paper, we conclude with several more interesting questions about pseudoknots.

\smallskip

\noindent {\bf Question 1:} Is each pseudoknot uniquely determined by its signed WeRe-set?

\smallskip

\noindent {\bf Question 2:} Do flypes involving nugatory precrossings preserve pseudoknot type?

\smallskip

\noindent {\bf Question 3:} Earlier, we considered shadows of $2$-braids. What is an appropriate definition, in general, of pseudobraids? In particular, when are two pseudobraids equivalent? Furthermore, in classical braid theory there are Markov moves that characterize when two braids have equivalent closures. Is there an analog for pseudobraids?

\appendix
\section{Pseudoknot Tables}
Knot tables of prime pseudoknots with at most 9 crossings are based on the computation of their WeRe-sets. The number of obtained pseudoknots is given in the following table:

\bigskip

\noindent  \begin{tabular}{|c|c|} \hline
$n$ &  \\  \hline
3 & 3 \\  \hline
4 & 5 \\  \hline
5 & 15 \\  \hline
6 & 59 \\  \hline
7 & 212 \\  \hline
8 & 1344 \\  \hline
9 & 7281 \\  \hline
\end{tabular}

\bigskip

In this paper we provide tables of pseudoknots with at most 5 crossings, given by their Conway symbols, WeRe-sets, and followed by diagrams. The remaining part of the tables can be downloaded from the address: {\tt http://www.mi.sanu.ac.rs/vismath/pseudotab.pdf}. In this file we used a concise notation for WeRe-sets. For example, instead writing the complete term with fractions, e.g., $(i,i,i), \{(0_1,{6\over 2^3}),(3_1,{2\over 2^3})\}$, we wrote just $(i,i,i), \{(0_1,6),(3_1,2)\}$, knowing that this pseudoknot has 3 precrossings, so the second entries in the term $\{(0_1,6),(3_1,2)\}$ need to be divided by $2^3$, or, in general, by $2^k$, where $k$ is the number of precrossings of the pseudoknot.

Since for pseudoknots is used the same generalized Conway notation as for virtual knots, drawings of the corresponding pseudoknots given by their Conway symbols can be produced by using WebMathematica, at the address: {\tt http://math.ict.edu.rs:8080/webMathematica/virt/virt000.jsp}

\bigskip

\noindent  \begin{tabular}{|c|c|c|} \hline
$3_1.1$ & $(i,i,i)=(i^3)$ & $\{(0_1,{6\over 2^3}),(3_1,{2\over 2^3})\}$  \\  \hline
$3_1.2$ & $(i,i,1)=(i^2,1)$ & $\{(0_1,{3\over 2^2}),(3_1,{1\over 2^2})\}$  \\  \hline
$3_1.3$ & $(i,1,1)=(i,1^2)$ & $\{(0_1,{1\over 2}),(3_1,{1\over 2})\}$  \\  \hline
\end{tabular}

\bigskip

\begin{figure}[th]
\centerline{\includegraphics[width=3.6in]{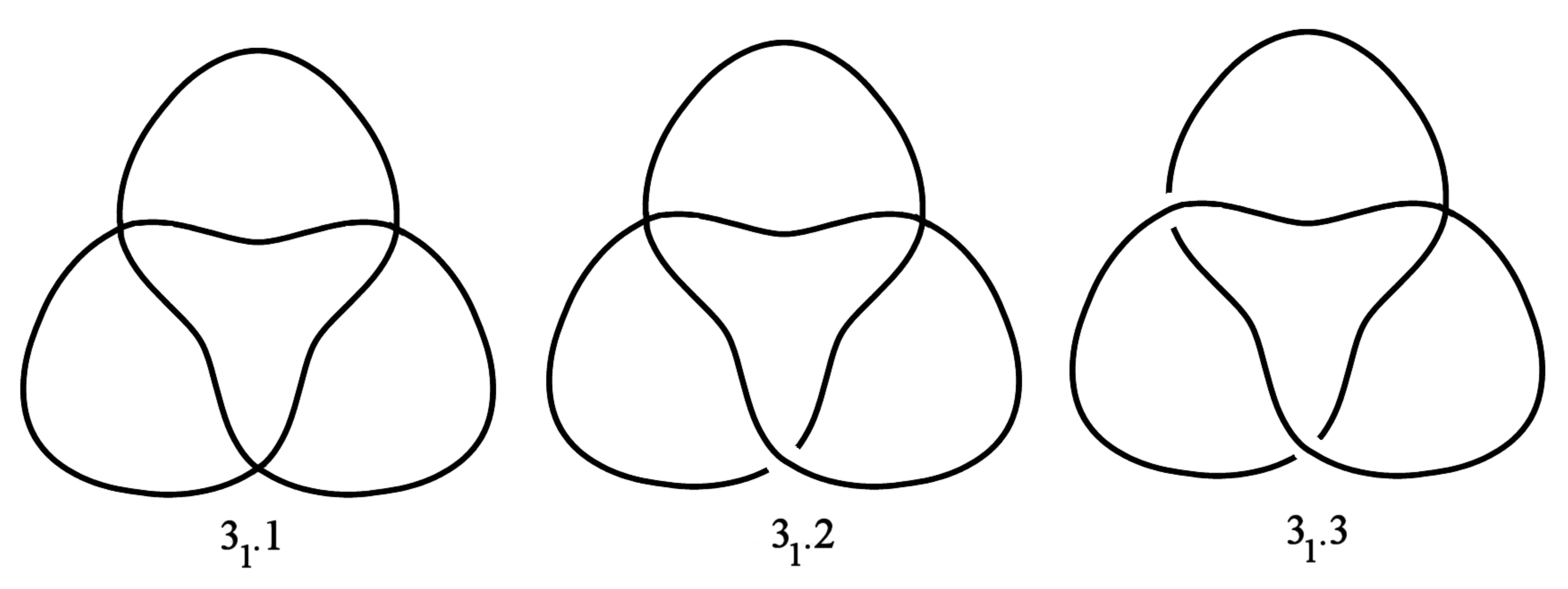}} \vspace*{8pt}
\caption{Pseudoknots $3_1.1$-$3_1.3$ derived from the trefoil knot $3_1$.
\label{f12}}
\end{figure}

\bigskip

\noindent  \begin{tabular}{|c|c|c|} \hline
$4_1.1$&$(i,i)(i,i)=(i^2)\,(i^2)$&$\{(0_1,{12\over 2^4}),(3_1,{2\over 2^4}),(4_1,{2\over 2^4})\}$\\\hline
$4_1.2$&$(i,i)(i,1)=(i^2)\,(i,1)$&$\{(0_1,{6\over 2^3}),(3_1,{1\over 2^3}),(4_1,{1\over 2^3})\}$\\\hline
$4_1.3$&$(i,i)(1,1)=(i^2)\,(1^2)$&$\{(0_1,{2\over 2^2}),(3_1,{1\over 2^2}),(4_1,{1\over 2^2})\}$\\\hline
$4_1.4$&$(i,1)(i,1)$&$\{(0_1,{3\over 2^2}),(4_1,{1\over 2^2})\}$\\\hline
$4_1.5$&$(i,1)(1,1)=(i,1)\,(1^2)$&$\{(0_1,{1\over 2}),(4_1,{1\over 2})\}$\\\hline
\end{tabular}

\bigskip

\begin{figure}[th]
\centerline{\includegraphics[width=4.8in]{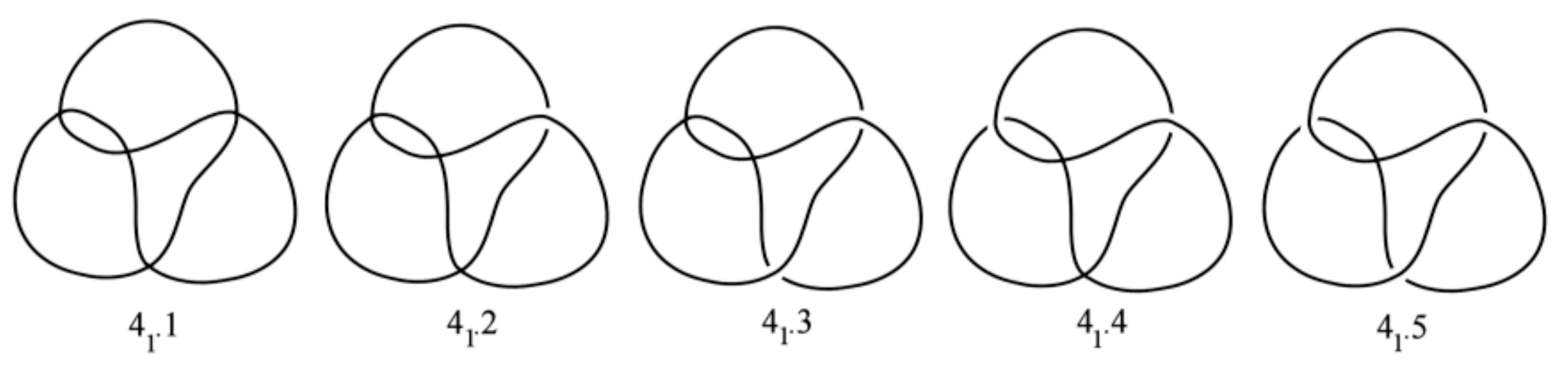}} \vspace*{8pt}
\caption{Pseudoknots $4_1.1$-$4_1.5$ derived from the figure-eight knot $4_1$.
\label{f12}}
\end{figure}

\bigskip

\noindent  \begin{tabular}{|c|c|c|} \hline
$5_1.1$&$(i,i,i,i,i)=(i^5)$&$\{(0_1,{20\over 2^5}),(3_1,{10\over 2^5}),(5_1,{2\over 2^5})\}$\\\hline
$5_1.2$&$(i,i,i,i,1)=(i^4,1)$&$\{(0_1,{10\over 2^4}),(3_1,{5\over 2^4}),(5_1,{1\over 2^4})\}$\\\hline
$5_1.3$&$(i,i,i,1,1)=(i^3,1^2)$&$\{(0_1,{4\over 2^3}),(3_1,{3\over 2^3}),(5_1,{1\over 2^3})\}$\\\hline
$5_1.4$&$(i,i,1,1,1)=(i^2,1^3)$&$\{(0_1,{1\over 2^2}),(3_1,{2\over 2^2}),(5_1,{1\over 2^2})\}$\\\hline
$5_1.5$&$(i,1,1,1,1)=(i,1^4)$&$\{(3_1,{1\over 2}),(5_1,{1\over 2})\}$\\\hline
\end{tabular}

\bigskip

\begin{figure}[th]
\centerline{\includegraphics[width=4.8in]{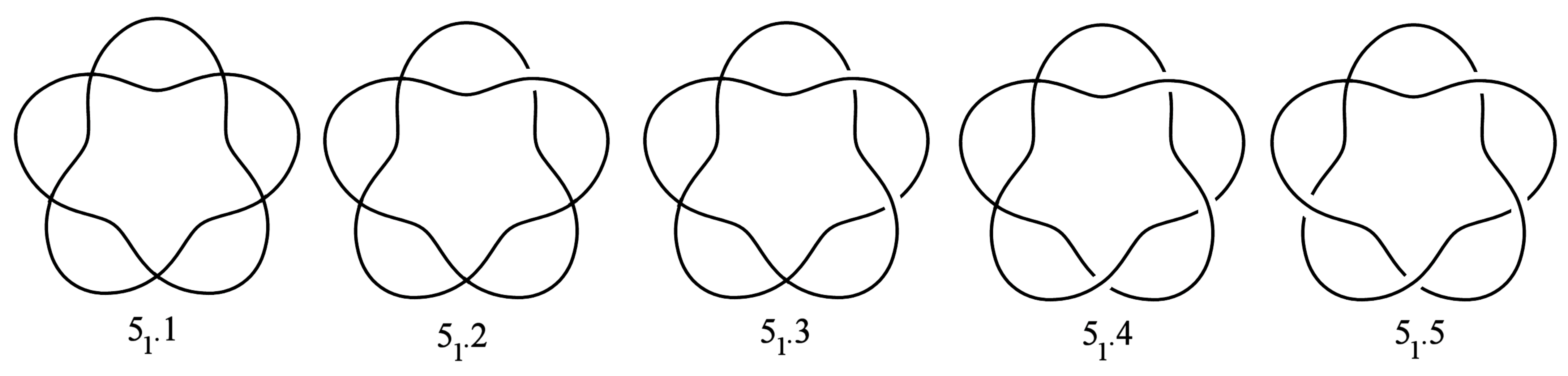}} \vspace*{8pt}
\caption{Pseudoknots $5_1.1$-$5_1.5$ derived from the knot $5_1$.
\label{f12}}
\end{figure}

\bigskip

\noindent  \begin{tabular}{|c|c|c|} \hline
$5_2.1$&$(i,i,i)(i,i)=(i^3)\,(i^2)$&$\{(0_1,{22\over 2^5}),(3_1,{6\over 2^5}),(4_1,{2\over 2^5}),(5_2,{2\over 2^5})\}$\\\hline
$5_2.2$&$(i,i,i)(i,1)=(i^3)\,(i,1)$&$\{(0_1,{11\over 2^4}),(3_1,{3\over 2^4}),(4_1,{1\over 2^4}),(5_2,{1\over 2^4})\}$\\\hline
$5_2.3$&$(i,i,i)(1,1)=(i^3)\,(1^2)$&$\{(0_1,{3\over 2^3}),(3_1,{3\over 2^3}),(4_1,{1\over 2^3}),(5_2,{1\over 2^3})\}$\\\hline
$5_2.4$&$(i,i,1)(i,1)(i^2,1)\,(i,1)$&$\{(0_1,{5\over 2^3}),(3_1,{2\over 2^3}),(5_2,{1\over 2^3})\}$\\\hline
$5_2.5$&$(i,i,1)(1,1)(i^2,1)\,(1^2)$&$\{(0_1,{1\over 2^2}),(3_1,{2\over 2^2}),(5_2,{1\over 2^2})\}$\\\hline
$5_2.6$&$(i,1,1)(i,i)=(i,1^2)\,(i^2)$&$\{(0_1,{5\over 2^3}),(3_1,{1\over 2^3}),(4_1,{1\over 2^3}),(5_2,{1\over 2^3})\}$\\\hline
$5_2.7$&$(i,1,1)(i,1)=(i,1^2)\,(i,1)$&$\{(0_1,{2\over 2^2}),(3_1,{1\over 2^2}),(5_2,{1\over 2^2})\}$\\\hline
$5_2.8$&$(i,1,1)(1,1)=(i,1^2)\,(1^2)$&$\{(3_1,{1\over 2}),(5_2,{1\over 2})\}$\\\hline
$5_2.9$&$(1,1,1)(i,i)=(1^3)\,(i^2)$&$\{(0_1,{2\over 2^2}),(4_1,{1\over 2^2}),(5_2,{1\over 2^2})\}$\\\hline
$5_2.10$&$(1,1,1)(i,1)=(1^3)\,(i,1)$&$\{(0_1,{1\over 2}),(5_2,{1\over 2})\}$\\\hline
\end{tabular}

\bigskip

\begin{figure}[th]
\centerline{\includegraphics[width=4.8in]{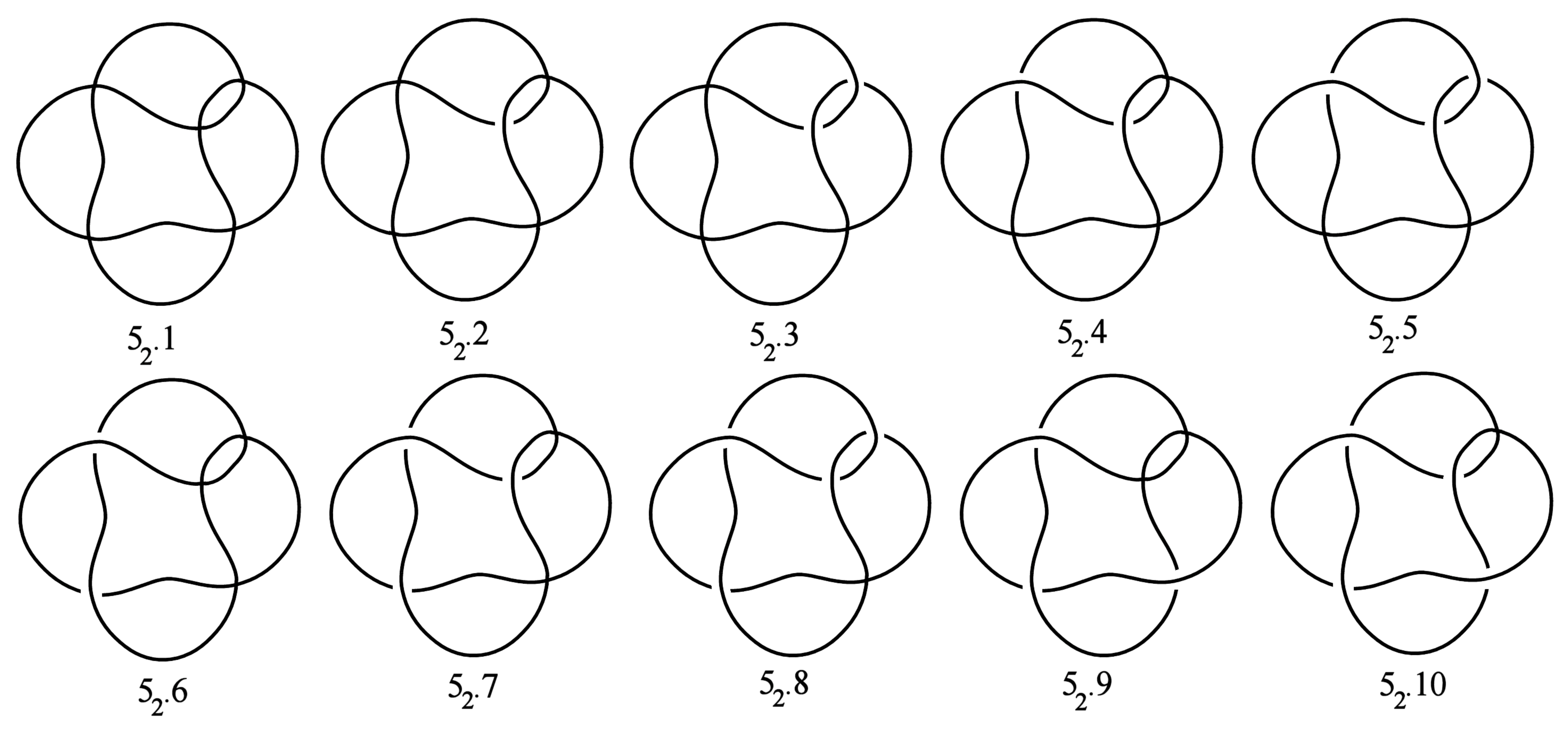}} \vspace*{8pt}
\caption{Pseudoknots $5_2.1$-$5_2.10$ derived from the knot $5_2$.
\label{f12}}
\end{figure}

\subsection*{Acknowledgments}  We are grateful to Jim Morrow and the NSF for allowing us to work on this research as a part of the University of Washington REU. We also wish to acknowledge Seattle University for summer research support. Slavik Jablan and Ljiljana Radovi\' c thank for support of the Serbian Ministry of Science (Grant No. 174012). It is also a pleasure express our gratitude to Lena Folwaczny for pointing out the history of the cosmetic crossing problem and to Roger Fenn for sparking our interest in this problem. Finally, we are indebted to Christopher Tuffley for pointing out an error in Figure 2 in the original version of this paper.


\begin{thebibliography}{00}

\bibitem{adams}{
Colin Adams.
\textit{The Knot Book.}
Providence, RI: W.H. Freeman and Company, 1994.}

\bibitem{birman}{
Birman, J. S.
\textit{Braids, Links, and the Mapping Class Groups.}
Ann. Math. Studies, No. 82. Princeton, NJ: Princeton University Press, 1976.}

\bibitem{caudron}{ A. Caudron,  {\it Classification des n\oe uds et des enlancements},
Public. Math. d'Orsay 82. Univ. Paris Sud, Dept. Math., Orsay,
1982.}

\bibitem{conway}{ J. Conway, An enumeration of knots and links and some
of their related properties, in {\it Computational Problems in
Abstract Algebra}, ed. J. Leech, Proc. Conf. Oxford 1967, (Pergamon
Press, New York, 1970), 329--358.}

\bibitem{hanaki}{
Ryo Hanaki.
\textit{Pseudo diagrams of knots, links and spatial graphs.}
Osaka J. Math.
\textbf{47} (2010), 863-883.}

\bibitem{SMALL}{
Allison Henrich, Noel MacNaughton, Sneha Narayan, Oliver Pechenik, and Jennifer Townsend.
\textit{Classical and virtual pseudodiagram theory and new bounds on unknotting numbers and genus.}
J. Knot Theory Ramifications, \textbf{20} (2011) pp. 625-650.}

\bibitem{4}{ S.~V. Jablan and R. Sazdanovi\' c, {\it LinKnot -- Knot Theory by
Computer},  World Scientific, New Jersey, London, Singapore, 2007;
http://math.ict.edu.rs/.}

\bibitem{6}{
Kanenobu, T. and Murakami, H. (1986) Two-bridge knots with
unknotting number one, {\it Proc. Amer. Math. Soc.}, {\bf 98},
499--502.}

\bibitem{lousophia}{
Louis H. Kauffman and Sofia Lambropoulou.
\textit{Classifying and Applying Rational Knots and Rational Tangles.}
Contemporary Mathematics AMS series 304, 2002.}

\bibitem{2}{ D. Rolfsen, {\it Knots and Links}, (Publish \& Perish
Inc., Berkeley, 1976); American Mathematical Society, AMS Chelsea
Publishing, 2003.}

\bibitem{schubert}{
H. Schubert.
\textit{Knoten mit zwei Br\"{u}cken.}
Math. Zeit., 65 (1956)}

\bibitem{5}{
Siebenmann, L. (1975) Exercices sur les n\oe uds rationnels, Orsay
(unpublished).}


\end{thebibliography}
\end{document}